\newtheorem{theorem}{Theorem}[section]
\newtheorem{lemm}[theorem]{Lemma}
\newtheorem{prop}[theorem]{Proposition}
\newtheorem{coro}[theorem]{Corollary}
\theoremstyle{definition}
\newtheorem{defn}[theorem]{Definition}
\newtheorem{rmrk}[theorem]{Remark}
\newtheorem{exmp}[theorem]{Example}
\DeclareMathOperator{\coker}{coker}
\DeclareMathOperator{\img}{im}
\DeclareMathOperator{\Hom}{Hom}
\newcommand\A{\mathcal{A}}
\newcommand\B{\mathcal{B}}
\newcommand{\Q}{\mathbb{Q}}
\newcommand\tL{\mathtt{L}}
\newcommand\tM{\mathtt{M}}
\newcommand\tm{\mathtt{m}}
\newcommand\tN{\mathtt{N}}
\newcommand\tX{\mathtt{X}}
\newcommand\tx{\mathtt{x}}
\newcommand\tA{\mathtt{A}}
\newcommand\tB{\mathtt{B}}
\newcommand\tb{\mathtt{b}}
\newcommand\tC{\mathtt{C}}
\newcommand\tK{\mathtt{K}}
\newcommand\tF{\mathtt{F}}
\newcommand\tI{\mathtt{I}}
\newcommand{\m}{\mathfrak{m}}
\newcommand\tT{\mathtt{T}}
\newcommand\ttt{\mathtt{t}}
\newcommand\ts{\mathtt{s}}
\newcommand{\Rfpmod}{R-\mathbf{fpmod}}
\newcommand{\Rfpres}{R-\mathbf{fpres}}
\definecolor{darkgreen}{rgb}{0.008,0.617,0.067}
\definecolor{brown}{rgb}{0.6,0.4,0.2}
\begin{document}

\lstset{basicstyle=\scriptsize\ttfamily,
        frame=single}

\author{Mohamed Barakat}
\address{Department of mathematics, University of Kaiserslautern, 67653 Kaiserslautern, Germany}
\email{\href{mailto:Mohamed Barakat <barakat@mathematik.uni-kl.de>}{barakat@mathematik.uni-kl.de}}

\author{Markus Lange-Hegermann}
\address{Lehrstuhl B f\"ur Mathematik, RWTH-Aachen University, 52062 Germany}
\email{\href{mailto:Markus Lange-Hegermann <markus.lange.hegermann@rwth-aachen.de>}{markus.lange.hegermann@rwth-aachen.de}}

\title[An Axiomatic Setup for Algorithmic Homological Algebra, Localization]{An Axiomatic Setup for \\ Algorithmic Homological Algebra \\ and an Alternative Approach to Localization}

\dedicatory{
\textit{This paper is dedicated to our teacher Professor \textsc{Wilhelm Plesken} \\ on the occasion of his 60th birthday.}
}

\begin{abstract}
In this paper we develop an axiomatic setup for algorithmic homological algebra of \textsc{Abel}ian categories. This is done by exhibiting all existential quantifiers entering the definition of an \textsc{Abel}ian category, which for the sake of computability need to be turned into constructive ones. We do this explicitly for the often-studied example \textsc{Abel}ian category  of finitely presented modules over a so-called computable ring $R$, i.e., a ring with an explicit algorithm to solve one-sided (in)homogeneous linear systems over $R$. For a finitely generated maximal ideal $\m$ in a commutative ring $R$ we show how solving (in)homogeneous linear systems over $R_\m$ can be reduced to solving associated systems over $R$. Hence, the computability of $R$ implies that of $R_\m$. As a corollary we obtain the computability of the category of finitely presented $R_\m$-modules as an \textsc{Abel}ian category, without the need of a \textsc{Mora}-like algorithm. The reduction also yields, as a by-product, a complexity estimation for the ideal membership problem over local polynomial rings. Finally, in the case of localized polynomial rings we demonstrate the computational advantage of our homologically motivated alternative approach in comparison to an existing implementation of \textsc{Mora}'s algorithm.
\end{abstract}

\thanks{
  We would like to thank the anonymous referee for valuable comments.
}

\maketitle


\section{Introduction}

As finite dimensional constructions in linear algebra over a field $k$ boil down to \textbf{solving (in)homogeneous linear systems} over $k$, the \textsc{Gauss}ian algorithm makes the whole theory perfectly computable, provided $k$ itself is. Solving linear systems in \textsc{Gauss}ian form, i.e., in reduced echelon form, is a trivial task. And computing the \textsc{Gauss}ian form of a linear system is thus a major step towards its solution.

Homological algebra of module categories can be viewed as linear algebra over general rings. Hence, in analogy to linear algebra over a field one would expect that solving linear systems would play an important rule in making the theory computable. \cite{BR} introduced a data structure for additive functors of module categories useful for an efficient computer implementation. Solving linear systems was used to describe the calculus of such functors in a constructive way. Here we proceed in a more foundational manner. We show that solving linear systems is, as expected, the key to the complete computability of the category of finitely presented modules, \emph{merely} viewed as an \textsc{Abel}ian category \cite{HS,Rot09,weihom}.  In Section \ref{basic_constructions} we list all the existential quantifiers entering the definition of an \textsc{Abel}ian category. Turning all of them into algorithms for any given \textsc{Abel}ian category establishes its computability. This abstract point of view widens the range of applicability of a computer implementation along these lines beyond the context of module categories.

Section \ref{computability} addresses the computability of the \textsc{Abel}ian categories of finitely presented modules over so-called computable rings. A ring $R$ is called \textbf{computable} if one can effectively solve (in)homogeneous linear systems over $R$ (cf.~Def.~\ref{computable}). Proposition~\ref{coker} together with Theorem~\ref{R-fpres_basic_constructions} show that, as expected, the computability of the ring together with some simple matrix operations indeed suffice to provide all the algorithms needed to make this category computable \emph{as an \textsc{Abel}ian category}.

One way to verify the computability of a ring is to find an appropriate substitute for the \textsc{Gauss}ian algorithm. Fortunately such substitutes exist for many rings of interest. Beside the well-known \textsc{Hermite} normal form algorithm for principal ideal rings with computable gcd's, it turns out that appropriate generalizations of the classical \textsc{Gröbner} basis algorithm for polynomial rings \cite{Buch} provide the desired substitute for a wide class of commutative \emph{and} noncommutative rings \cite{LevThesis,RobPhd}.

Although finding a substitute for the \textsc{Gauss}ian algorithm, which we will refer to as computing a \textbf{``distinguished basis\footnote{Basis in the sense of a generating set, and \emph{not} in the sense of a free basis.}''}, is the traditional way to solve linear systems over rings, it is only \emph{one} mean to this end. Indeed, other means do exist: \\
Let $R_\m$ be the localization of the commutative ring $R$ at a finitely generated maximal ideal $\m \triangleleft R$. Theorem~\ref{ThmLocal} together with Corollary~\ref{CoroLocal} show how computations in the \textsc{Abel}ian category of finitely presented modules over the local ring $R_\m$ can be reduced to computations over the global ring $R$. In particular, one can avoid computing distinguished bases over the local ring $R_\m$. The idea is very simple. Elements of the local ring $R_\m$ can be viewed as numerator-denominator pairs $(n,d)$ with $r\in R$ and $d\in R\setminus \m$. Likewise, $(r\times c)$-matrices over $R_\m$ can be viewed as numerator-denominator pairs $(N,d)$ with $N\in R^{r\times c}$ and $d\in R\setminus\m$. It is now easy to see that solving (in)homogeneous linear systems over $R_\m$ can simply be done by solving associated systems over $R$, thus deducing the computability of $R_\m$ from that of $R$.

In principle, \textsc{Mora}'s algorithm, which provides a ``distinguished basis'', can replace \textsc{Buchberger}'s algorithm for all sorts of computations over localized polynomial rings. This also seems to be the common practice. Nevertheless, a considerable amount of these computations only depend on the category of finitely presented modules over localized polynomial rings \emph{merely} being \textsc{Abel}ian. From this point of view we show how \textsc{Buchberger}'s algorithm suffices to carry out all such constructions and explain in §\ref{comparison mora} why our homologically motivated approach to localization of polynomial rings is computationally superior to an approach based on \textsc{Mora}'s algorithm. However, \textsc{Mora}'s algorithm remains indispensable when it comes to the computation of \textsc{Hilbert} series of modules over such rings, for example. Still, these modules are normally the outcome of huge homological computations, which often enough only become feasible through our alternative approach. \textsc{Serre}'s intersection formula is a typical example of this situation (cf.~Example~\ref{IntersectionFormula}).

In Section \ref{implementation} we will shortly describe our implementation. The examples in Section \ref{examples} illustrate the computational advantage of our alternative approach. An existing performant implementation of \textsc{Mora}'s algorithm fails for these examples.

The paper suggests a specification which can be used to realize a constructive setting for the homological algebra of further concrete \textsc{Abel}ian categories. Realizing this for other \textsc{Abel}ian categories is work in progress.

\section{Basic Constructions in \textsc{Abel}ian Categories}\label{basic_constructions}

The aim of this section is to list the basic constructions of an \textsc{Abel}ian category with enough projectives, which suffice to build all the remaining ones. In case these few basic constructions are computable, it follows that all further constructions become computable as well.

In the list we only want to emphasize the existential quantifiers, that need to be turned into constructive ones. We decided to suppress the universal properties needed to correctly formulate some of the points below, as we assume that they are well-known to the reader.

\bigskip
\noindent
$\A$ is a \textbf{category}:
\begin{enumerate}
  \item\label{Ab_IdentityMatrix} For any object $M$ there exists an \textbf{identity morphism} $1_M$.
  \item\label{Ab_Compose} For any two composable morphisms $\phi,\psi$ there exists a \textbf{composition} $\phi\psi$.
\suspend{enumerate}
$\A$ is a category \textbf{with zero}:
\resume{enumerate}
  \item There exists a \textbf{zero object} $0$.
  \item\label{Ab_ZeroMatrix} For all objects $M,N$ there exists a \textbf{zero morphism} $0_{MN}$.
\suspend{enumerate}
$\A$ is an \textbf{additive} category:
\resume{enumerate}
  \item\label{Ab_AddMat} For all objects $M,N$ there exists an \textbf{addition}\footnote{In fact, the addition can be recovered from the product, coproduct, composition, and identity morphisms. A direct description of the addition is nevertheless important for computational efficiency.} $(\phi,\psi)\mapsto \phi+\psi$ in the \textsc{Abel}ian group $\Hom_\A(M,N)$.
  \item\label{Ab_SubMat} For all objects $M,N$ there exists a \textbf{subtraction} $(\phi,\psi)\mapsto \phi-\psi$ in the \textsc{Abel}ian group $\Hom_\A(M,N)$.
  \item\label{Ab_DiagMat} For all objects $A_1,A_2$ there exists a \textbf{direct sum} $A_1\oplus A_2$.
  \item\label{Ab_UnionOfRows} For all pairs of morphisms $\phi_i:A_i\to M$, $i=1,2$ there exists a \textbf{coproduct morphism} $\langle\phi_1,\phi_2\rangle:A_1\oplus A_2\to M$.
  \item\label{Ab_UnionOfColumns} For all pairs of morphisms $\phi_i:M\to A_i$, $i=1,2$ there exists a \textbf{product morphism} $\{\phi_1,\phi_2\}:M\to A_1\oplus A_2$.
\suspend{enumerate}
$\A$ is an \textbf{\textsc{Abel}ian} category:
\resume{enumerate}
  \item\label{Ab_SyzygiesGenerators} For any morphism $\phi:M\to N$ there exists a \textbf{kernel} $\ker \phi \stackrel{\kappa}{\hookrightarrow} M$, such that
  \item\label{Ab_DecideZeroEffectively} for any morphism $\tau:L\to M$ with $\tau\phi=0$ there exists a \emph{unique} \textbf{lift} $\tau_0: L\to \ker \phi$ of $\tau$ along $\kappa$, i.e., $\tau_0 \kappa = \tau$ (cf.~\ref{lift_colift}.(k).(ii)).
  \item\label{Ab_UnionOfRows_IdentityMatrix} For any morphism $\phi:M\to N$ there exists a \textbf{cokernel} $N \stackrel{\epsilon}{\twoheadrightarrow} \coker \phi$, such that
  \item\label{Ab_colift} for any morphism $\eta:N\to L$ with $\phi\eta=0$ there exists a \emph{unique} \textbf{colift} $\eta_0: \coker \phi \to L$ of $\eta$ along $\epsilon$, i.e., $\epsilon \eta_0 = \eta$ (cf.~\ref{lift_colift}.(c).(ii)).
\suspend{enumerate}
$\A$ has \textbf{enough projectives}:
\resume{enumerate}
  \item\label{projective_lift} $P$ is called projective if for each morphism $\phi:P\to N$ and each morphism $\epsilon:M \to N$ with $\img \phi\leq \img \epsilon$ there exists a \textbf{projective lift} $\phi_1:P\to M$ of a $\phi$ along $\epsilon$ (cf.~Def.~\ref{special_projective_lift} and Remark~\ref{general_projective_lift}).
  \item\label{projective_hull} For each object $M$ there exists a \textbf{projective hull} $\nu: P \twoheadrightarrow M$.
\end{enumerate}

\bigskip
If for an \textsc{Abel}ian category $\A$ we succeed in making the above basic constructions computable, all further constructions which only depend on $\A$ being \textsc{Abel}ian will be computable as well.

\begin{defn}
  Let $\A$ be an \textsc{Abel}ian category.
  \begin{enumerate}
    \item We say that $\A$ is \textbf{computable as an \textsc{Abel}ian category} if the existential quantifiers in (\ref{Ab_IdentityMatrix})-(\ref{Ab_colift}) can be turned into constructive ones.
    \item If additionally the existential quantifiers in (\ref{projective_lift})-(\ref{projective_hull}) can be turned into constructive ones, then we say that $\A$ is \textbf{computable as an \textsc{Abel}ian category with enough projectives}.
    \end{enumerate}
\end{defn}

\cite{BaSF} details a construction of spectral sequences (of filtered complexes) only using the axioms of an \textsc{Abel}ian category as detailed above. In particular, all arguments are based on operations on morphisms rather than chasing single elements.

To compute the derived functors of an additive functor $F:\A \to \B$ where $\A$ does not have enough projectives resp.\  injectives one needs to provide a substitute for projective resp.\  injective resolutions. The abstract \textsc{de Rham} theorem suggests the use of so-called left (resp.\  right) $F$-acyclic resolutions (as used in \cite[Prop.~III.6.5]{Har}, for example).

\section{Computability in \textsc{Abel}ian categories of finitely presented modules}\label{computability}

The previous section suggests a short path to ensure computability in an \textsc{Abel}ian category. This section follows that path for the often-studied example of module categories.

\smallskip
From now on let $\A:=\Rfpmod$ be the category of finitely presented \emph{left} $R$-modules.
The category $\mathbf{fpmod}-R$ of finitely presented right $R$-modules is treated analogously. In this section we will show how to make the basic operations of §\ref{basic_constructions} computable. As customary from linear algebra the basic data structure for computations will be finite dimensional matrices over $R$.

Finitely presented $R$-modules are in particular finitely generated. Thus, a morphism in $\Rfpmod$ can be represented by a finite dimensional matrix, the so-called \textbf{representation matrix}, with entries in $R$.

A free object in $\Rfpmod$ is a free module of finite rank $r$, i.e., a module of the form $R^{1\times r}$. And since every finitely generated module $M$ is an epimorphic image of some $\nu:F_0 \twoheadrightarrow M$ with $F_0=R^{1\times r_0}$, it follows that $\Rfpmod$ even has \textbf{enough free objects}.

By definition of $\Rfpmod$ each object even admits an exact sequence $\xymatrix@1{F_1 \ar[r]^{\partial_1} & F_0 \ar@{>>}[r]^{\nu} & M}$, $F_1=R^{1\times r_1}$ being free of finite rank. The morphism $\partial_1$ is called a \textbf{finite free presentation} of $M$. If we denote by $\tM\in R^{r_1\times r_0}$ the matrix representing $\partial_1$ and call it \textbf{presentation matrix} of $M$, then $\nu$ induces an isomorphism from
\[
  \coker \tM := R^{1\times r_0}/R^{1\times r_1}\tM = \coker( R^{1\times r_1} \xrightarrow{\tM} R^{1\times r_0} ) = \coker\partial_1
\]
to $M$. The rows of $\tM$ are regarded as \textbf{relations} among the $r_0$ \textbf{generators} of $M$ given by the residue classes of the unit row vectors in $R^{1\times r_0}/R^{1\times r_1}\tM$ (cf.~\cite[§2]{BR}, \cite[Def.~2.1.23]{GP08}, \cite[Def.~1.11]{DL}).

Denote by $\Rfpres$ the category of finite left $R$-presentations with objects being finite dimensional matrices over $R$, where one identifies two matrices $\tM\in R^{r_1\times r_0}$  and $\tM'\in R^{r_1'\times r_0}$ with the same number of columns to one object, if $R^{1\times r_1}\tM=R^{1\times r_1'}\tM'$, as $R$-submodules of $R^{1\times r_0}$.  The set $\Hom_{\Rfpres}(\tM,\tL)$ of morphisms between two objects $\tM\in R^{r_1\times r_0}$ and $\tL\in R^{r_1'\times r_0'}$ is the set  of all $r_0\times r_0'$-matrices  $\varphi$ over $R$ with $R^{1\times r_1}\tM\varphi\leq R^{1\times r_1'}\tL$, where one identifies two matrices $\varphi_1$ and $\varphi_2$ to one morphism, if they induce the same $R$-module homomorphism from
$\coker \tM$ to $\coker \tL$. Summing up:
\begin{prop}\label{coker}
$\xymatrix@1{\Rfpres\ \ar[r]^(.47)\coker &  \  \Rfpmod}$ is an equivalence of categories.
\end{prop}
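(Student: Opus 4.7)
The plan is to establish the three standard criteria for $\coker$ to be an equivalence: it must be a well-defined functor, essentially surjective, and fully faithful. Essential surjectivity is immediate from the definition of $\Rfpmod$: every finitely presented module $M$ admits a finite free presentation $R^{1\times r_1} \xrightarrow{\partial_1} R^{1\times r_0} \twoheadrightarrow M$, so with $\tM$ the matrix representing $\partial_1$ one obtains $M \cong \coker \tM$, as already noted in the paragraph preceding the proposition.

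Well-definedness of $\coker$ on objects is built into the definition of $\Rfpres$: two matrices $\tM \in R^{r_1\times r_0}$ and $\tM' \in R^{r_1'\times r_0}$ with $R^{1\times r_1}\tM = R^{1\times r_1'}\tM'$ have literally identical quotients $R^{1\times r_0}/R^{1\times r_1}\tM$, and these are exactly the matrices identified to one object in $\Rfpres$. On morphisms, the condition $R^{1\times r_1}\tM\varphi \leq R^{1\times r_1'}\tL$ imposed in the definition of $\Hom_{\Rfpres}(\tM,\tL)$ is precisely what is needed so that right-multiplication by $\varphi$ descends from $R^{1\times r_0}$ to a well-defined $R$-linear map $\coker \tM \to \coker \tL$. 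Composition corresponds to matrix multiplication and the identity morphism to the identity matrix, so $\coker$ is a functor.

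For full faithfulness I would use projectivity of finitely generated free modules. Faithfulness is immediate from the defining equivalence on morphisms of $\Rfpres$: if $\varphi_1$ and $\varphi_2$ induce the same map $\coker \tM \to \coker \tL$, then $\varphi_1-\varphi_2$ sends $R^{1\times r_0}$ into $R^{1\times r_1'}\tL$, which is exactly the relation identifying $\varphi_1$ with $\varphi_2$. For fullness, given an $R$-homomorphism $f : \coker \tM \to \coker \tL$, the composite $R^{1\times r_0} \twoheadrightarrow \coker \tM \xrightarrow{f} \coker \tL$ lifts along the canonical epimorphism $R^{1\times r_0'} \twoheadrightarrow \coker \tL$ by projectivity of $R^{1\times r_0}$, yielding a homomorphism $R^{1\times r_0} \to R^{1\times r_0'}$, i.e., an $r_0 \times r_0'$-matrix $\varphi$. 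Since the rows of $\tM$ vanish in $\coker \tM$, the rows of $\tM\varphi$ vanish in $\coker \tL$, i.e., lie in $R^{1\times r_1'}\tL$, confirming that $\varphi$ is a morphism in $\Rfpres$ and, by construction, that $\coker \varphi = f$.

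No step presents a genuine obstacle; the proposition is really the classical folklore that the category of finitely presented modules admits an equivalent matrix-level description. The equivalence relation on morphisms in $\Rfpres$ has been tailored to make $\coker$ faithful, and projectivity of finitely generated free modules supplies fullness — the only point requiring care is to recognize that the two identifications used in the definition of $\Rfpres$ (on objects and on morphisms) correspond exactly to the kernels of $\coker$ on objects and on hom-sets respectively.
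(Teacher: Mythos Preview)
Your argument is correct. The paper itself gives no proof of this proposition: it is introduced with the words ``Summing up:'' immediately after the definition of $\Rfpres$, signalling that the authors regard it as a direct restatement of the definitions rather than a result requiring argument. Indeed, the identifications imposed on objects and morphisms of $\Rfpres$ are chosen precisely so that faithfulness is tautological and well-definedness is automatic; essential surjectivity is the content of the paragraph preceding the proposition. The only point not literally built into the definitions is fullness, and your use of projectivity of $R^{1\times r_0}$ to lift an arbitrary homomorphism $\coker\tM\to\coker\tL$ to a matrix $\varphi$ is exactly the standard (and only) way to supply it. So your proposal is a complete and correct expansion of what the paper leaves implicit.
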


The advantage of $\Rfpres$ is that it can directly be realized on a computer. Hence, describing the basic constructions of §\ref{basic_constructions} in $\Rfpres$ makes the category $\Rfpmod$ computable.

But we note that for $\Rfpmod$ or equivalently $\Rfpres$ to be \textsc{Abel}ian, cokernels and kernels of morphisms between finitely presented modules need to be finitely presented. This is obvious for cokernels but in general false for kernels:

\medskip
\framebox{
\begin{minipage}[c]{0.94\linewidth}
\textbf{Assumption (*)}: \\
\noindent
From now on we assume that $R$ is a ring for which the category $\Rfpmod$ is \textsc{Abel}ian.
\end{minipage}
}

\medskip
\noindent
Left \textsc{Noether}ian rings are the most prominent rings satisfying this assumption.

\subsection{Basic operations for matrices and computable rings}

Let $\tA$ be an $r_1\times r_0$-matrix over $R$.

\subsubsection{$\mathtt{(Relative)SyzygiesGenerators}$} \label{SyzygiesGeneratorsDef}
An $\tX\in R^{r_2\times r_1}$ is called a matrix of \textbf{generating syzygies (of the rows) of $\tA$} if for all $\tx\in R^{1\times r_1}$ with $\tx\tA=\mathtt{0}$, there exists a $\mathtt{y}\in R^{1\times r_2}$ such that $\mathtt{y}\tX=\tx$. The rows of $\tX$ are thus a generating set of the kernel of the map $R^{1\times r_1}\xrightarrow{\tA} R^{1\times r_0}$. We write
\[
  \tX=\mathtt{SyzygiesGenerators}(\tA)
\]
and say that $\tX$ is the most general solution of the homogeneous linear system $\tX\tA=\mathtt{0}$.

Further let $\mathtt{L}$ be an $r_1'\times r_0$-matrix over $R$. $\tX\in R^{r_2\times r_1}$ is called a matrix of \textbf{relative generating syzygies (of the rows) of $\tA$ modulo $\mathtt{L}$} if the rows of $\tX$ form a generating set of the kernel of the map $R^{1\times r_1} \xrightarrow{\tA} \coker \mathtt{L}$. We write
 \[
   \tX=\mathtt{RelativeSyzygiesGenerators}(\tA,\mathtt{L})
 \]
and say that $\tX$ is the most general solution of the homogeneous linear system $\tX\tA+\mathtt{Y}\mathtt{L}=\mathtt{0}$. This last system is of course equivalent\footnote{In practice, however, one can often implement efficient algorithms to compute $\tX$ without explicitly computing $\mathtt{Y}$.} to solving the homogeneous linear system $\begin{pmatrix}\tX & \mathtt{Y} \end{pmatrix} \begin{pmatrix} \tA \\ \mathtt{L} \end{pmatrix}=\mathtt{0}$ (cf.~\cite[§3.2]{BR}).

\subsubsection{$\mathtt{DecideZero(Effectively)}$ and $\mathtt{RightDivide}$}\label{RightDivide}
Further let $\tB$ be an $r_2\times r_0$-matrix over $R$. Deciding the solvability and solving the inhomogeneous linear system $\tX\tA=\tB$ is equivalent to the construction of matrices $\tN,\tT$ such that $\tN=\tT\tA+\tB$ satisfying the following condition: If the $i$-th row of $\tB$ is a linear combination of the rows of $\tA$, then the $i$-th row of $\tN$ is zero\footnote{So we do not require a ``normal form'', but only a mechanism to decide if a row is zero modulo some relations.}. Hence the inhomogeneous linear system $\tX\tA=\tB$ is solvable (with $\tX=-\tT$), if and only if $\tN=\mathtt{0}$. We write
\[
  (\tN,\tT)=\mathtt{DecideZeroEffectively}(\tA,\tB) \  \mbox{ and }\  \tN=\mathtt{DecideZero}(\tA,\tB).
\]
In case $\tN=\mathtt{0}$ we write $\tX=\mathtt{RightDivide}(\tB,\tA)$.

Rows of the matrices $\tA$ and $\tB$ can be considered as elements of the free module $R^{1\times r_0}$. Deciding the solvability of the inhomogeneous linear system $\tX\tA=\tB$ for a single row matrix $\tB$ is thus nothing but the \textbf{submodule membership problem} for the submodule generated by the rows of the matrix $\tA$. Finding a particular solution $\tX$ (in case one exists) solves the submodule membership problem \textbf{effectively}.

As with relative syzygies we also consider a relative version. In case the inhomogeneous system $\tX\tA=\tB \mod \tL$ is solvable, we denote a particular solution by
\[
  \tX=\texttt{RightDivide}(\tB,\tA,\tL).
\]
This is equivalent to solving $\begin{pmatrix}\tX & \mathtt{Y} \end{pmatrix} \begin{pmatrix} \tA \\ \mathtt{L} \end{pmatrix}=\tB$. For details cf.~\cite[§3.1.1]{BR}.

\begin{defn} \label{computable}
  A ring $R$ is called \textbf{left} (resp.\  \textbf{right}) \textbf{computable} if any finite dimensional inhomogeneous linear system $\tX\tA=\tB$ (resp.\  $\tA\tX=\tB$) over $R$ is effectively solvable in the following sense: There exists \emph{algorithms} computing $\mathtt{SyzygiesGenerators}(\tA)$ and $\mathtt{DecideZeroEffectively}(\tA,\tB)$. $R$ is called \textbf{computable} if it is left and right computable.
\end{defn}

In other words, a ring $R$ is computable if one can \emph{effectively solve (in)homogeneous linear systems over $R$}.

\begin{rmrk}
  If the ring $R$ is left computable then the categories $\Rfpres$ and (hence) $\Rfpmod$ are \textsc{Abel}ian, and the Assumption (*) is satisfied.
\end{rmrk}

We want to emphasize that all the free modules used in the constructions below are assumed to be given on a \emph{free set of generators}. This is necessary since there is no known algorithm to decide whether a finitely presented module over a computable ring $R$ is free or not. In practice this means that if we need to construct a free left $R$-module of rank $r$ we simply present it by the empty matrix in $R^{0\times r}$.

\subsection{Computability of the category of finite presentations}

\begin{theorem}\label{R-fpres_basic_constructions}
Let $R$ be a (left) computable ring and $\A:=\Rfpres$ the \textsc{Abel}ian category of finite left $R$-presentations. Then $\A$ is computable as an \textsc{Abel}ian category with enough projectives.
\end{theorem}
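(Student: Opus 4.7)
The plan is to walk through the sixteen axioms in Section~\ref{basic_constructions} and, for each one, exhibit an explicit algorithm in $\Rfpres$ whose only nontrivial ingredients are the three operations furnished by left computability of $R$, namely $\mathtt{SyzygiesGenerators}$, $\mathtt{DecideZeroEffectively}$ and (the relative version of) $\mathtt{RightDivide}$, together with elementary matrix manipulations. Since $\Rfpres$ is a \emph{strict} realization of $\Rfpmod$ via Proposition~\ref{coker}, each axiom will be interpreted as a statement about presentation matrices $\tM\in R^{r_1\times r_0}$ and morphism matrices $\varphi\in R^{r_0\times r_0'}$ (well-defined modulo the relations on both sides).

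The bookkeeping-heavy but essentially trivial axioms are handled by direct matrix formulas. For (\ref{Ab_IdentityMatrix}) take $1_{\tM}=I_{r_0}$; for (\ref{Ab_Compose}) take matrix multiplication; for the zero object use the empty matrix in $R^{0\times 0}$ and for (\ref{Ab_ZeroMatrix}) the zero matrix of the appropriate shape; addition and subtraction (\ref{Ab_AddMat})--(\ref{Ab_SubMat}) are entrywise; the direct sum (\ref{Ab_DiagMat}) is the block-diagonal matrix $\mathrm{diag}(\tA_1,\tA_2)$; the coproduct and product morphisms (\ref{Ab_UnionOfRows})--(\ref{Ab_UnionOfColumns}) are the vertical and horizontal stacks $\langle\varphi_1,\varphi_2\rangle=\bigl(\begin{smallmatrix}\varphi_1\\ \varphi_2\end{smallmatrix}\bigr)$ and $\{\varphi_1,\varphi_2\}=(\varphi_1\ \varphi_2)$. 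The cokernel (\ref{Ab_UnionOfRows_IdentityMatrix}) of $\varphi:\tM\to\tL$ is presented by the stacked matrix $\bigl(\begin{smallmatrix}\tL\\ \varphi\end{smallmatrix}\bigr)$, with projection $\epsilon$ represented by $I_{r_0'}$; the colift (\ref{Ab_colift}) of $\eta:\tL\to\tN$ is then represented by the same matrix $\eta$, the hypothesis $\varphi\eta=0$ in $\Hom$ guaranteeing well-definedness. Finally, free modules $R^{1\times r}$, presented by the empty matrix in $R^{0\times r}$, are projective, so the canonical epimorphism $R^{1\times r_0}\twoheadrightarrow\coker\tM$ given by $I_{r_0}$ serves as the projective hull (\ref{projective_hull}).

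The interesting steps are the kernel (\ref{Ab_SyzygiesGenerators})--(\ref{Ab_DecideZeroEffectively}) and the projective lift (\ref{projective_lift}), and this is where the hypotheses really get used. Given a morphism $\varphi:\coker\tM\to\coker\tL$, compute
\[
  \tK := \mathtt{RelativeSyzygiesGenerators}(\varphi,\tL),
\]
so that the rows of $\tK$ generate $\{x\in R^{1\times r_0}\mid x\varphi\in R^{1\times r_1'}\tL\}$. Then present the kernel object by the first $(\text{rows of }\tK)$-columns of
\[
  \mathtt{SyzygiesGenerators}\!\bigl(\tbinom{\tK}{\tM}\bigr),
\]
with the embedding $\kappa$ into $\coker\tM$ represented by $\tK$ itself; routine verification shows this really is a kernel. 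For the lift (\ref{Ab_DecideZeroEffectively}) of $\tau$ with $\tau\varphi=0$, we need $\tau_0$ with $\tau_0\tK\equiv\tau\pmod{\tM}$; this is exactly $\tau_0 = \mathtt{RightDivide}(\tau,\tK,\tM)$, whose existence is guaranteed by the defining property of $\tK$, and whose uniqueness modulo the presentation of the kernel is inherent to $\mathtt{RightDivide}$. Uniqueness of the lift as a morphism follows because any other candidate differs by something in the syzygies of $\tK$ modulo $\tM$, which by construction presents the kernel. The projective lift (\ref{projective_lift}) of $\phi:R^{1\times r}\to\coker\tN$ along $\epsilon:\coker\tM\to\coker\tN$ (with $\img\phi\le\img\epsilon$) is obtained in one call as $\mathtt{RightDivide}(\phi,\epsilon,\tN)$, solvability being ensured by the image hypothesis.

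The main subtlety to guard against is to make sure every formula respects the equivalence relation on presentation matrices (equal row spaces) and on morphism matrices (equal induced maps on cokernels); this is exactly the content of the well-definedness checks in \cite{BR}, to which one can appeal. Apart from that, the only real obstacle is the kernel construction, which genuinely uses both flavours of syzygy computation (relative then absolute) and which in turn powers the lift axiom; once that step is in place, everything else reduces either to elementary matrix operations or to a single invocation of $\mathtt{RightDivide}$, so the list (\ref{Ab_IdentityMatrix})--(\ref{projective_hull}) is exhausted and $\Rfpres$ is computable as an \textsc{Abel}ian category with enough projectives. \qed
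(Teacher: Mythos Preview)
Your proof is correct and follows essentially the same approach as the paper's: both walk through the axiom list and realize each construction by an explicit matrix formula, with the only substantive work being the kernel (via relative syzygies) and the lift/projective lift (via $\mathtt{RightDivide}$). The only cosmetic differences are that you present the kernel object by projecting $\mathtt{SyzygiesGenerators}\bigl(\begin{smallmatrix}\tK\\ \tM\end{smallmatrix}\bigr)$ onto its first block of columns while the paper uses a second call to $\mathtt{RelativeSyzygiesGenerators}(\tK,\tM)$---equivalent by the remark in \S\ref{SyzygiesGeneratorsDef}---and that the list has fifteen items, not sixteen.
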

\begin{proof}
Using all of the vocabulary introduced so far we show how for the category $\Rfpres$ the \ref{projective_hull} operations for \textsc{Abel}ian categories listed in §\ref{basic_constructions} can be turned into algorithms.

In the following we denote $\tM\in R^{r_1\times r_0}$ and $\tN\in R^{s_1\times s_0}$ presentation matrices of $M$ and $N$, respectively. I.e., $M:=\coker \tM$ and $N:=\coker \tN$.

\bigskip
\noindent
$\Rfpres$ is a \textbf{category}:
\begin{enumerate}
  \item\label{IdentityMatrix} \texttt{IdentityMatrix}: The identity morphism $1_M$ of $M:=\coker(R^{1\times r_1} \xrightarrow{\tM} R^{1\times r_0})$ is represented by the identity matrix $\tI_{r_0}\in R^{r_0\times r_0}$.
  \item\label{Compose} \texttt{Compose}: The composition of two composable morphisms $\phi,\psi$ represented by the matrices $\tA,\tB$ is represented by the matrix product $\tA\tB$.
\suspend{enumerate}
$\Rfpres$ is a category \textbf{with zero}:
\resume{enumerate}
  \item A zero object $0$ is presented by an empty matrix in $R^{0\times 0}$.
  \item\label{ZeroMatrix} \texttt{ZeroMatrix}: The zero morphism $0_{MN}$ for pairs of objects $M:=\coker(R^{1\times r_1} \xrightarrow{\tM} R^{1\times r_0})$, $N:=\coker(R^{1\times s_1} \xrightarrow{\tN} R^{1\times s_0})$ is represented by the zero matrix $\mathtt{0}_{r_0,s_0}\in R^{r_0\times s_0}$.
\suspend{enumerate}
$\Rfpres$ is an \textbf{additive} category:
\resume{enumerate}
  \item\label{AddMat} \texttt{AddMat}: The addition of two morphisms $\phi,\psi:M\to N$ represented by the matrices $\tA,\tB$ is represented by the matrix sum $\tA+\tB$.
  \item\label{SubMat} \texttt{SubMat}: The difference of two morphisms $\phi,\psi:M\to N$ represented by the matrices $\tA,\tB$ is represented by the matrix subtraction $\tA-\tB$.
  \item\label{DiagMat} \texttt{DiagMat}: The direct sum of two objects $M$ and $N$ is presented by the block diagonal matrix $\begin{pmatrix} \tM & 0 \\ 0 & \tN \end{pmatrix}$.
  \item\label{UnionOfRows} \texttt{UnionOfRows}: The coproduct morphism $\langle\phi,\psi\rangle$ of two morphisms $\phi:M\to L$ and $\psi:N\to L$ represented by the matrices $\tA$ and $\tB$ is represented by the stacked matrix $\begin{pmatrix} \tA \\ \tB \end{pmatrix}$.
  \item\label{UnionOfColumns} \texttt{UnionOfColumns}: The product morphism $\{\phi,\psi\}$ of two morphisms $\phi:L\to M$ and $\psi:L\to N$ represented by the matrices $\tA$ and $\tB$ is represented by the augmented matrix $\begin{pmatrix} \tA & \tB \end{pmatrix}$.
\suspend{enumerate}
$\Rfpres$ is an \textbf{\textsc{Abel}ian} category:
\resume{enumerate}
  \item\label{SyzygiesGenerators} \texttt{RelativeSyzygiesGenerators}: To compute the kernel $\ker \phi \stackrel{\kappa}{\hookrightarrow} M$ of a morphism $\phi:M\to N$ represented by a matrix $\tA$ we do the following: First compute $\tX=\mathtt{RelativeSyzygiesGenerators}(\tA,\tN)$, the matrix representing $\kappa$. Then $\ker\phi$ is presented by the matrix $\tK=\mathtt{RelativeSyzygiesGenerators}(\tX,\tM)$.
  \item\label{DecideZeroEffectively} \texttt{DecideZeroEffectively}: Let $\tau:L\to M$ be a morphism represented by a matrix $\tB$ and $\kappa:K \hookrightarrow M$ a monomorphism represented by a matrix $\tA$ with $\tau\phi=0$ for $\phi=\coker\kappa$. Then the matrix $\tX=\texttt{RightDivide}(\tB,\tA,\tM)$ is a representation matrix for $\tau_0:L\to K$, the lift of $\tau$ along $\kappa$. It is an easy exercise to see that $\tX$ indeed represents a \emph{morphism} (cf.~\cite[3.1.1, case (2)]{BR}).
    \item\label{UnionOfRows_IdentityMatrix} \texttt{UnionOfRows} \& \texttt{IdentityMatrix}: The cokernel module $\coker \phi$ of a morphism $\phi:M\to N=\coker\tN$ represented by the matrix $\tA$ is presented by the stacked matrix
    $\begin{pmatrix} \tA \\ \tN \end{pmatrix}$. The natural epimorphism $N \stackrel{\epsilon}{\twoheadrightarrow} \coker \phi$ is represented by the identity matrix $\tI_{s_0}\in R^{s_0\times s_0}$.
  \item\label{Colift} Without loss of generality assume that the cokernel module $C=\coker\phi$ is presented according to (\ref{UnionOfRows_IdentityMatrix}), with $\tI_{s_0}$ the representation of the natural epimorphism $\epsilon:N \twoheadrightarrow C$. Further let  $\eta:N\to L$ be a morphism represented by $\tB$. Then the colift $\eta_0:C\to L$ along $\epsilon$ is again given by the matrix $\tB$.
\suspend{enumerate}
$\Rfpres$ has \textbf{enough free objects}:
\resume{enumerate}
  \item\label{FreeLift} \texttt{DecideZeroEffectively}: Let $F$ be a free $R$-module presented by an empty matrix, i.e., $F$ is given on a set of \emph{free} generators. Further let $\phi:F\to N$ and $\epsilon:M \to N$ be morphisms represented by the matrices $\tB$ and $\tA$, respectively. The image condition $\img\phi\leq\img\epsilon$ guarantees the existence of the matrix $\tX=\texttt{RightDivide}(\tB,\tA,\tN)$, which is a representation matrix of a \textbf{free lift} $\phi_1:F\to M$ along $\epsilon$ (cf.~\cite[3.1.1, case (1)]{BR}).
  \item\label{FreeHull} \texttt{IdentityMatrix}: A free hull $\nu: F\to M$ of $M$ is given by $F=\coker \tF$ with $\tF=\mathtt{0}\in R^{0\times r_0}$ and $\nu$ is represented by the identity matrix $\tI_{r_0}$.
\end{enumerate}
\end{proof}

In the constructive setting we seek, it is necessary to decide if a module is zero and if two morphisms are equal. To decide if $M=\coker \tM$ is zero check whether $\texttt{DecideZero}(\tI_{r_0},\tM)=\texttt{0}$. To decide the equality of two morphisms $\phi,\psi:M\to N = \coker \tN$ represented by $\tA$ and $\tB$ check whether $\texttt{DecideZero}(\tA-\tB,\tN)=\texttt{0}$. This, in turn, enables deciding properties like monic, epic, exactness of two composable morphisms, etc.

As $R$-modules are sets in makes sense to compute in $M=\coker \tM$. This, in turn, requires deciding equality of two elements in $m,m'\in M$, represented by two rows $\tm,\tm'\in R^{1\times r_0}$, respectively. This is again achieved by test whether $\texttt{DecideZero}(\tm-\tm',\tM)=\texttt{0}$.

\subsection{Closed symmetric monoidal \textsc{Abel}ian categories}\label{Hom}
The category of $R$-modules over a \emph{commutative} ring $R$ admits further constructions. The tensor product $M\otimes_R N$ of two $R$-modules $M,N$ turns the $\Rfpmod$ into a symmetric monoidal category. It is even a closed symmetric monoidal category with the homomorphism module $\Hom_R(M,N)$ as an internal $\Hom$ object.

For the constructibility of the tensor product and its derived functors $\operatorname{Tor}^R_i$ in $\Rfpres$ see \cite[Example.~7.1.5]{GP08} or \cite[Problem~4.7]{DL}. For the (internal) $\Hom$ module and the higher extension functors $\operatorname{Ext}_R^c$ see \cite[Example~2.1.26]{GP08}, \cite[Problems~4.5,4.6]{DL}, \cite[Thm.~3.3.15]{KR}. The morphism part of these and other functors is systematically dealt with in \cite{BR}.

\subsection{Free and projective modules}

Whereas a finitely presented module is free on free generators if an only if its presentation matrix is zero, deciding the freeness of a finitely presented module over a computable ring, let alone computing a free basis, can in general be highly non-trivial. Deciding projectiveness is often easier than deciding freeness: Let $\nu:F_0\twoheadrightarrow M$ be a free presentation of the $R$-module $M$. It follows that $M$ is projective if and only if $\nu$ admits a section $\sigma:M\hookrightarrow F_0$ (i.e., $\sigma\nu=\mathrm{id}_M$). Finding the section $\sigma$ for a finitely and freely presented module $M\stackrel{\nu}{\twoheadleftarrow} F_0 \xleftarrow{\tM} F_1$ leads to solving a two-sided inhomogeneous linear system\footnote{$\tX+\mathtt{Y}\tM=\mathtt{Id},\  \tM\tX=0$, where $\tX$ is a square matrix representing $\sigma$ and $\mathtt{Y}$ another unknown matrix.} over $R$, which can of course be brought to a one-sided inhomogeneous linear system if $R$ is commutative. Hence, testing projectiveness of finitely presented modules over commutative computable rings is constructive \cite{ZL}. Another simple exercise is to see that an $R$-module is projective if and only if $\operatorname{Ext}_R^1(M,K_1(M))=0$, where $K_1(M)$ is the first syzygy module of $M$. This can also be turned into an algorithm for commutative computable rings as outlined in §\ref{Hom}. Without the commutativity assumption \textsc{Serre}'s Remark \cite{FAC} states that a module admitting a \emph{finite free} resolution is projective if and only if it is stably free. Shortening the finite free resolution \cite[Prop.~5.11]{lam99} yields a simple proof that can be turned into an algorithm for computing the projective dimension of $M$ (and hence to decide projectiveness) whenever $M$ is a left (resp.\  right) module over the left (resp.\  right) computable ring $R$ and a finite free resolution of $M$ is \emph{constructible} \cite[Algorithm~1]{QR_S}. For a not necessarily commutative ring $R$ with finite global dimension there is yet another approach based on \textsc{Auslander}'s degree of torsion-freeness \cite{AB}, which involves the higher extension modules with values in $R$ of the so-called \textsc{Auslander} dual module of $M$. This approach is constructive if $R$ is (left \emph{and} right) computable and the finite global dimension of $R$ is explicitly known (cf.~\cite[Thm.~7]{CQR05}).

The \textsc{Quillen-Suslin} Theorem states that a polynomial ring $k[x_1,\ldots,x_n]$ over a principal ideal domain $k$ is \textsc{Hermite}, i.e., every stably free $k[x_1,\ldots,x_n]$-module is free. Algorithms were given in \cite{LS,LW,GV} and implementations in \cite{FQ} and \cite{CQR07}. A constructive version of \textsc{Stafford}'s Theorem \cite{stafford} offers a way to decide freeness and compute a free basis of finitely presented stably free modules over the \textsc{Weyl} algebra $A_n(k)$ and the rational \textsc{Weyl} algebra $B_n(k)$, where $k$ is a computable field of characteristic $0$ \cite{QR_S}. The \textsc{Jacobson} normal form \cite{cohn} offers an alternative algorithm for $B_1(k)$. An implementation can be found in the {\sf Maple} package \texttt{Janet} \cite{BlinkovCidetal2}.

\bigskip
Computability stands for deciding zero and computing syzygies. As mentioned in the Introduction, the axiomatic approach pursued so far underlines the conceptual importance of solving (in)homogeneous linear systems rather than computing a ``distinguished basis'', the latter being the traditional way to solve such systems.

\section{Computing over local commutative rings}\label{local rings}

This section provides a simple alternative approach to solve (in)homogeneous linear systems over localizations of commutative computable rings at maximal ideals. The simplicity lies in avoiding the computation of distinguished bases over these local rings. In particular, for localized polynomial rings this approach offers a way to circumvent \textsc{Mora}'s basis algorithm. More precisely: \\
For a commutative computable ring $R$ with a finitely generated maximal ideal $\m$ we show in Lemma \ref{LocalSyzygies} and Proposition \ref{LocalModuleMembership} how to reduce solving linear systems over the local ring $R_\m$ to solving linear systems over the ``global'' ring $R$. Thus, the computability of $R$ implies that of $R_\m$, and as a corollary, the computability of $R_\m-\mathbf{fpres}$ as an \textsc{Abel}ian category.

This reduction leads to a result in complexity analysis of local polynomial rings, formulated in §\ref{LocalComplexity}. Finally, in §\ref{comparison mora} we compare our approach to \textsc{Mora}'s algorithm in the case of localized polynomial rings.

Let $1\in S\subseteq R$ be a multiplicatively closed subset of the commutative ring $R$. Recall, the \textbf{localization} of $R$ at $S$ is defined by $S^{-1}R:=\lbrace\frac{r}{s}\mid r\in R, s\in S\rbrace{/\!\sim}$, where $\frac{r}{s}\sim \frac{r^\prime}{s^\prime}\Leftrightarrow 0\in (rs^\prime-sr^\prime)\cdot S$. The localization $S^{-1}M$ of an $R$-module $M$ is defined by $S^{-1}R\otimes_R M$ and the localization $S^{-1}\phi$ of a morphism $\phi$ maps $\frac{m}{s}\in S^{-1}M$ to $\frac{\phi(m)}{s}$. Note that localization is an \textbf{exact functor}. For a prime ideal $\mathfrak{p}\triangleleft R$ the set $S:=R\setminus \mathfrak{p}$ is multiplicatively closed and the localization $M_\mathfrak{p}$ \emph{at} $\mathfrak{p}$ is defined as $S^{-1}M$. In this section we will only treat the case when the prime ideal $\mathfrak{p}=\m$ is \textbf{maximal}.

\subsection{Computability in the category of finite presentations over localized rings}\label{local_basic_constructions}

\begin{theorem}\label{ThmLocal}
Let $R$ be a commutative computable ring and $\m=\langle m_1,\ldots, m_k\rangle$ a finitely generated maximal ideal in $R$. Then $R_\m$ is a computable ring.
\end{theorem}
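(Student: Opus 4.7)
The plan is to represent each matrix $\tA\in R_\m^{r\times c}$ as a numerator--denominator pair $(\tN,d)$ with $\tN\in R^{r\times c}$ and $d\in R\setminus\m$, so that $\tA=\tfrac{1}{d}\tN$, and to reduce both primitives of Definition~\ref{computable} for $R_\m$ to computations over the already computable ring $R$. As a preliminary I need to decide membership in $\m$, which is immediate from the computability of $R$ together with the finite list of generators: $r\in\m$ iff $\mathtt{DecideZero}\bigl(r,(m_1,\ldots,m_k)^T\bigr)=\tt 0$.

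For $\mathtt{SyzygiesGenerators}$ it suffices, after absorbing the unit $\tfrac{1}{d}$, to produce generators of the kernel of right multiplication by $\tN$ over $R_\m$. I claim $\tX:=\mathtt{SyzygiesGenerators}_R(\tN)$ already does the job once its rows are reinterpreted in $R_\m$. Indeed, any $\tY\in R_\m^{1\times r}$ with $\tY\tN=0$ can be cleared to $\tY=\tY'/t$ with $\tY'\in R^{1\times r}$ and $t\notin\m$; from $\tY'\tN=0$ in $R_\m$ one gets $s\notin\m$ with $s\tY'\tN=0$ in $R$, so $s\tY'$ is an $R$-linear combination of rows of $\tX$ and, since $s$ and $t$ are units in $R_\m$, so is $\tY$. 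This is essentially flatness of $R_\m$ over $R$.

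For $\mathtt{DecideZeroEffectively}(\tA,\tB)$ I would clear denominators with a common $d$ and process $\tB$ row by row: given a row $\tm\in R^{1\times r_0}$, decide whether it lies in the $R_\m$-rowspan of $\tN\in R^{r_1\times r_0}$, and if so produce $\tX$ with $\tX\tN=\tm$ over $R_\m$. Chasing denominators shows this happens iff the quotient ideal
\[
  I_\tm\;:=\;\{\,s\in R\mid s\tm\in R^{1\times r_1}\tN\,\}\ \triangleleft\ R
\]
is not contained in $\m$. Generators of $I_\tm$ together with witnessing combinations are read off as the last column of $\mathtt{SyzygiesGenerators}_R\begin{pmatrix}\tN\\ \tm\end{pmatrix}$: a row $(\tY,\,-s)$ of this syzygy matrix encodes the relation $s\tm=\tY\tN$. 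Because $\m$ is maximal, $I_\tm\not\subseteq\m$ is the correct criterion, and because $\m$ is finitely generated this non-containment is decided by finitely many ideal-membership tests in $\m$, delivered by the preliminary step. A witness $(s,\tY)$ with $s\notin\m$ yields the local solution $\tX=\tY/s$; rows for which $I_\tm\subseteq\m$ are copied into $\tN$ with the corresponding row of $\tT$ set to $0$. The pieces assemble into the pair $(\tN,\tT)$ demanded by $\mathtt{DecideZeroEffectively}$ over $R_\m$.

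The main obstacle is this last step: recognizing $\mathtt{DecideZeroEffectively}$ over $R_\m$ as the finite-data question ``$I_\tm\not\subseteq\m$?'' and verifying that this reduction is genuinely constructive, which uses both hypotheses on $\m$ in essential ways---finite generation for deciding membership in $\m$, and maximality for reducing non-containment to a disjunction over a finite generating set of $I_\tm$. By comparison, the syzygy step is a direct consequence of flatness and requires no hypothesis on $\m$ beyond being a prime ideal.
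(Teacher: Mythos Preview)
Your syzygies step matches the paper's Lemma~\ref{LocalSyzygies} essentially verbatim. For $\mathtt{DecideZeroEffectively}$ you take a genuinely different route. The paper's Proposition~\ref{LocalModuleMembership} reduces the local membership problem to a \emph{single inhomogeneous} system over $R$: it observes that any $t\notin\m$ has an inverse $y$ modulo $\m$ (here maximality is essential, since $R/\m$ must be a field), so $yt=1+\sum z_i m_i$, and this rewrites $\tilde{\ttt}\tilde{\tA}+t\tilde{\tb}=0$ as the $R$-system $\begin{pmatrix}\mathtt{y}&\mathtt{z}\end{pmatrix}\begin{pmatrix}\tilde{\tA}\\\mathtt{m}\tilde{\tb}\end{pmatrix}+\tilde{\tb}=0$. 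You instead compute the transporter ideal $I_\tm$ as the image of the last-coordinate projection of $\mathtt{SyzygiesGenerators}_R\left(\begin{smallmatrix}\tN\\\tm\end{smallmatrix}\right)$ and test its generators for membership in $\m$. Both reductions are correct and use only the primitives available for the computable ring $R$.

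One correction to your commentary: maximality is \emph{not} what reduces ``$I_\tm\not\subseteq\m$'' to a disjunction over generators of $I_\tm$---that equivalence holds for any ideal whatsoever. What your argument actually needs is that $\m$ be \emph{prime} (so that $R\setminus\m$ is multiplicatively closed and the criterion $I_\tm\not\subseteq\m$ characterizes local membership) and finitely generated (so that membership in $\m$ is decidable). Consequently your method works verbatim for localization at any finitely generated prime, whereas the paper's inverse-modulo-$\m$ trick genuinely requires $\m$ maximal---a limitation the authors acknowledge explicitly after Proposition~\ref{LocalModuleMembership}. So your approach is in fact more general than the paper's, at the modest cost of an extra syzygy computation per row rather than a single inhomogeneous solve. (A small notational quibble: you use $\tN$ both for the numerator of $\tA$ and for the output of $\mathtt{DecideZeroEffectively}$; this should be disambiguated.)
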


The proof of this theorem will be given in §\ref{ProofLocal}. We first draw the following conclusion:

\begin{coro}\label{CoroLocal}
Let $R$ and $\m$ as in Theorem~\ref{ThmLocal}. Then $R_\m-\mathbf{fpres}$ is \textsc{Abel}ian and is computable as an \textsc{Abel}ian category with enough projectives.
\end{coro}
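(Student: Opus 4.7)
The corollary is essentially a two-step consequence of the machinery already assembled. The plan is to chain Theorem~\ref{ThmLocal} (which promotes computability of $R$ to computability of $R_\m$) with Theorem~\ref{R-fpres_basic_constructions} (which promotes computability of a ring to computability of its category of finite presentations as an \textsc{Abel}ian category with enough projectives), observing that all hypotheses transfer automatically.

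In more detail, I would proceed as follows. First, invoke Theorem~\ref{ThmLocal} to conclude that $R_\m$ is a computable ring in the sense of Definition~\ref{computable}; since $R$ is commutative, so is $R_\m$, and therefore the notions of left and right computability coincide. In particular $R_\m$ is left computable. Second, apply the remark following Definition~\ref{computable} to the ring $R_\m$: left computability of $R_\m$ implies that $R_\m\text{-}\mathbf{fpres}$ (and equivalently $R_\m\text{-}\mathbf{fpmod}$) is \textsc{Abel}ian, so Assumption~(*) is satisfied for $R_\m$. Finally, apply Theorem~\ref{R-fpres_basic_constructions} with $R$ replaced by $R_\m$ to conclude that $R_\m\text{-}\mathbf{fpres}$ is computable as an \textsc{Abel}ian category with enough projectives, since the proof of that theorem only uses the ability to compute \texttt{SyzygiesGenerators} and \texttt{DecideZeroEffectively} over the base ring, which is exactly what computability of $R_\m$ provides.

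There is no genuine obstacle here: all the real work has been localized into Theorem~\ref{ThmLocal}, and the corollary merely records the categorical payoff of that theorem. The only point worth double-checking when writing out the proof is that the ``enough projectives'' clause indeed requires nothing beyond what Theorem~\ref{R-fpres_basic_constructions} already handles, namely the construction of a free hull (presented by an empty matrix over $R_\m$) and the free lift via \texttt{RightDivide} applied to matrices with entries in $R_\m$ — both of which are available as soon as $R_\m$ is computable.
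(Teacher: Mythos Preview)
Your argument is correct: once Theorem~\ref{ThmLocal} gives computability of $R_\m$, Theorem~\ref{R-fpres_basic_constructions} applies verbatim with $R$ replaced by $R_\m$, and nothing further is needed. This is the cleanest logical route to the corollary.

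The paper, however, proceeds differently and more concretely. Rather than invoking Theorem~\ref{R-fpres_basic_constructions} as a black box, it first fixes a specific data structure for matrices over $R_\m$ --- writing each matrix as a pair $\frac{\tilde{\tA}}{s}$ with numerator matrix $\tilde{\tA}$ over $R$ and a single common denominator $s\in R\setminus\m$ --- and then walks through the axioms of §\ref{basic_constructions} one by one, checking that each of the ``easy'' constructions (identity, composition, zero, sum, direct sum, stacking, cokernel, colift, free hull) reduces to the corresponding matrix operation over $R$ after clearing denominators, while deferring only the kernel/lift/free-lift axioms to Theorem~\ref{ThmLocal}. What this buys is twofold: it makes explicit the representation that the proof of Theorem~\ref{ThmLocal} itself relies on (the paper even remarks that this data structure is ``mandatory'' there), and it doubles as a specification for the implementation described in §\ref{LocalizeRingForHomalg}. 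Your abstract chaining is logically sufficient and more economical; the paper's hands-on verification is redundant as a proof but serves its expository and algorithmic aims.
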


\begin{proof}
Objects and morphisms in $R_\m-\mathbf{fpres}$ are given by finite matrices with entries in the localized ring $R_\m$, being fractions of elements of $R$. Each such matrix can be rewritten as a fraction $\frac{\tA}{s}$ with a numerator matrix $\tA$ over $R$ and a single element $s\in S:=R\setminus\m$ as a common denominator\footnote{This leads to a simple data structure for matrices over $R_\m$, which will prove advantageous from the standpoint of a computer implementation (cf.~§\ref{LocalizeRingForHomalg}).}. Using this simplified data structure for matrices over $R_\m$ is mandatory for the proof of Theorem~\ref{ThmLocal}. The computability of $R_\m$, essential for the constructions (\ref{SyzygiesGenerators}), (\ref{DecideZeroEffectively}), and (\ref{FreeLift}), is the statement of Theorem~\ref{ThmLocal}. We still need to go through the remaining basic constructions listed in the proof of Theorem~\ref{R-fpres_basic_constructions} and adapt the required matrix operations to the new data structure:

Points (\ref{IdentityMatrix})-(\ref{ZeroMatrix}) are covered by taking the identity matrix (with $1$ as denominator), composing the matrices $\frac{\tA}{s}$ and $\frac{\tB}{t}$ to $\frac{\mathtt{AB}}{st}$, taking an empty matrix, and taking the zero matrix (with $1$ as denominator), respectively. Points (\ref{AddMat})-(\ref{UnionOfColumns}), stemming from the axioms of an additive category, are also easily seen to reduce to the corresponding constructions over the global ring $R$ after writing the involved pairs of matrices with common denominators. The presentation matrix and representation matrix of the corresponding natural epimorphism of the cokernel (\ref{UnionOfRows_IdentityMatrix}) are again given by stacking matrices after writing them with a common denominator and by taking the identity matrix, respectively. The colift (\ref{Colift}) as described above was trivial anyway. The presentation matrix of a free hull (\ref{FreeHull}) of a module $M$ generated by $r_0$ elements is again given by an empty matrix with $r_0$ columns, while the identity matrix still represents the natural epimorphism.
\end{proof}

\subsection{Proof of Theorem~\ref{ThmLocal}}\label{ProofLocal}

Let $R$ be a commutative computable ring. Further let $\m=\langle m_1,\ldots, m_k\rangle$ a finitely generated maximal ideal in $R$. For the computability of $R_\m$ we need to compute generating sets of syzygies and to (effectively) solve the submodule membership problem (cf.~Def.~\ref{computable}). This is the content of Lemma~\ref{LocalSyzygies} and Proposition~\ref{LocalModuleMembership}.

\subsubsection{\texorpdfstring{$\mathtt{(Relative)SyzygiesGenerators}$}{(Relative)SyzygiesGenerators}}\label{LocalSyzygiesGenerators}

A matrix of generating syzygies over $R$ is also a matrix of generating syzygies over $R_\m$:

\begin{lemm}[Syzygies]\label{LocalSyzygies}
  Let $\tA\in R_\m^{m\times n}$. Rewrite  $\tA=\frac{\tilde{\tA}}{\tilde a}$ with $\tilde{\tA}\in R^{m\times n}$ and $\tilde a\in R\setminus\m$. If $\tilde{\tX}\in R^{k\times m}$ is a matrix of generating syzygies for $\tilde{\tA}$, then the matrix $\tX:=\frac{\tilde{\tX}}{1}$ is a matrix of generating syzygies of $\tA$.
\begin{proof}
Starting from an exact sequence $R^{1\times k}\stackrel{\tilde{\tX}}{\to}R^{1\times m}\stackrel{\tilde{\tA}}{\to}R^{1\times n}$ \emph{exactness} of the localization functor yields an exact sequence $R_\m^{1\times k}\stackrel{\tilde{\tX}/1}{\to}R_\m^{1\times m}\stackrel{\tilde{\tA}/1}{\to}R_\m^{1\times n}$. Since multiplication with $\frac{1}{\tilde a}$ is an isomorphism, the sequence $R_\m^{1\times k}\stackrel{\tilde{\tX}/1}{\to}R_\m^{1\times m}\stackrel{\tA}{\to}R_\m^{1\times n}$ is also exact.
\end{proof}
\end{lemm}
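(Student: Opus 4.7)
The plan is to exploit the fact that localization is an exact functor, which the paper explicitly points out just above in the discussion of $S^{-1}M$. The hypothesis that $\tilde{\tX}$ is a matrix of generating syzygies of $\tilde{\tA}$ is precisely the statement that the sequence
\[
R^{1\times k}\xrightarrow{\tilde{\tX}} R^{1\times m}\xrightarrow{\tilde{\tA}} R^{1\times n}
\]
is exact at the middle term. Applying the exact functor $(-)\otimes_R R_\m$ termwise preserves this exactness, so
\[
R_\m^{1\times k}\xrightarrow{\tilde{\tX}/1} R_\m^{1\times m}\xrightarrow{\tilde{\tA}/1} R_\m^{1\times n}
\]
is exact at the middle term as well.

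It then only remains to bridge the gap between $\tilde{\tA}/1$ and $\tA=\tilde{\tA}/\tilde a$. Here I would simply note that $\tilde a\in R\setminus\m$ is a unit in $R_\m$, so multiplication by $\tilde a$ (equivalently by its inverse $1/\tilde a$) on $R_\m^{1\times n}$ is an automorphism. Because $\tA = (1/\tilde a)\cdot(\tilde{\tA}/1)$, post-composing $\tilde{\tA}/1$ with this automorphism does not change the kernel of the map from $R_\m^{1\times m}$. Hence the image of $\tilde{\tX}/1$ equals the kernel of $\tA$, which is exactly what it means for $\tX:=\tilde{\tX}/1$ to be a matrix of generating syzygies of $\tA$.

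There is no real obstacle here; the whole content is the interplay of two facts: exactness of localization, and the fact that denominators are units after localization. The only thing to be slightly careful about is formulating the second point as a factorization $\tA=(1/\tilde a)\cdot(\tilde{\tA}/1)$ so that one can invoke ``kernel is preserved under post-composition with an isomorphism'' rather than working element-wise with fractions, which would be more tedious and essentially reprove exactness of localization by hand.
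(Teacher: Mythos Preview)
Your proposal is correct and follows essentially the same argument as the paper's proof: exactness of localization applied to the syzygy sequence, followed by the observation that multiplication by $1/\tilde a$ is an isomorphism so replacing $\tilde{\tA}/1$ by $\tA$ preserves exactness at the middle term. The only difference is that you spell out the ``post-composition with an isomorphism preserves kernels'' step a bit more explicitly than the paper does.
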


This lemma is valid more generally for any multiplicatively closed set $S$. Of course, the result of the algorithm can have redundant generators and thus might be a non-minimal set of generating syzygies.

\begin{rmrk}\label{Nakayama}
\textsc{Nakayama}'s Lemma implies that a finitely presented $R_\m$-module $M=\coker\tM$ is given on a minimal set of generators, if and only if its presentation matrix $\tM$ is unit-free (cf.~\cite[Chap.~5,~Prop.~4.3]{CLO5}). An element $\frac{d}{n}\in R_\m$ is a unit iff the numerator $d$ is not contained in $\m$, which is an ideal membership problem in $R$. Some elementary matrix transformations can now be used to construct unit-free presentation matrices (see for example \cite[§2]{BR}). Another easy consequence of \textsc{Nakayama}'s Lemma is the characterization of minimal resolutions over $R_\m$ as the unit-free ones \cite[§19.1]{Eis}. Being able to detect units in the computed syzygies we can use a standard procedure\footnote{Cf.~\cite[§3.2.1]{BR}, for example.} to compute a unit-free and hence minimal resolutions over $R_\m$.
\end{rmrk}

\subsubsection{\texorpdfstring{$\mathtt{DecideZero(Effectively)}$}{DecideZero(Effectively)}}\label{LocalDecideZero}

The algorithm for effectively deciding zero is a bit more involved. As seen in the proof of Lemma \ref{LocalSyzygies}, denominators of matrices can be omitted since multiplication with them is an isomorphism. So without loss of generality we assume all denominators $1$. To ease the notation let $\mathtt{m}:=\begin{pmatrix}m_1\\ \vdots \\ m_k\end{pmatrix}\in R^{k\times 1}$ denote the column of generators of the maximal ideal $\m$.

\begin{prop}[Submodule membership]\label{LocalModuleMembership}
Let $\tA=\frac{\tilde{\tA}}{1}\in R_\m^{m\times n}$ and $\tb=\frac{\tilde{\tb}}{1}\in R_\m^{1\times n}$ with numerator matrices $\tilde{\tA}$ and $\tilde{\tb}$ \emph{over} $R$. There exists a row matrix $\ttt\in R_\m^{1\times m}$ with $\mathtt{tA}+\tb=0$ iff there exists a matrix $\tilde{\ts}\in R^{1\times (m+k)}$ satisfying
\begin{eqnarray}
\tilde{\ts}\begin{pmatrix}\tilde{\tA} \\ \mathtt{m}\tilde{\tb}\end{pmatrix}+\tilde{\tb}=0.
\label{LocalModuleMembershipEqn}
\end{eqnarray}
\begin{proof}
Write $\tilde{\ts}=\begin{pmatrix}\mathtt{y} & \mathtt{z}\end{pmatrix}$ with $\mathtt{y}\in R^{1\times m}$, $\mathtt{z}\in R^{1\times k}$.

If $\tilde{\ts}$ and thus $\mathtt{y}$ and $\mathtt{z}$ exist, we simply set $t:=\mathtt{zm}+1\not\in\m$ and $\ttt:=\frac{\mathtt{y}}{t}$. For the converse implication write $\ttt=\frac{\tilde{\ttt}}{t}$ with $\tilde{\ttt}\in R^{1\times m}$ and $t\in R\setminus\m$. Since $t\not\in\m$ it has an inverse $y$ modulo $\m$, i.e., there exist $z_1,\ldots,z_k\in R$ with $yt=z_1 m_1+\ldots+z_k m_k+1$. We conclude
\begin{eqnarray*}
  \mathtt{tA+B=0} & \Leftrightarrow & \tilde{\ttt}\tilde{\tA}+t\tilde{\tb}=\mathtt{0}\\
     & \Leftrightarrow & y \tilde{\ttt}\tilde{\tA}+\left(z_1 m_1+\ldots+z_k m_k\right)\tilde{\tb}+1\cdot\tilde{\tb}=\mathtt{0}\\
     & \Leftrightarrow & \mathtt{y}\tilde{\tA}+\mathtt{zm}\tilde{\tb}+\tilde{\tb}=\mathtt{0}
\end{eqnarray*}
for $\mathtt{y}:=y\tilde{\ttt}$ and $\mathtt{z}:=\begin{pmatrix}z_1&\dots&z_k\end{pmatrix}\in R^{1\times k}$.
\end{proof}
\end{prop}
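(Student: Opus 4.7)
My plan is to prove the two implications separately. The $(\Leftarrow)$ direction should be essentially a computation, while the $(\Rightarrow)$ direction will require two non-trivial ideas: absorbing a ``killer'' factor that arises when lifting an $R_\m$-equation to $R$, and using the maximality of $\m$ to invert the denominator modulo $\m$.

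For $(\Leftarrow)$ I would start from the hypothesis $\mathtt{y}\tilde{\tA} + \mathtt{z}\mathtt{m}\tilde{\tb} + \tilde{\tb} = 0$ in $R$, where $\tilde{\ts} = (\mathtt{y}\ \mathtt{z})$, then introduce the element $t := \mathtt{z}\mathtt{m} + 1$ and observe that $t \equiv 1 \pmod{\m}$, so $t \notin \m$ because $\m$ is proper. Defining $\ttt := \mathtt{y}/t \in R_\m^{1\times m}$, the rest is the direct check that $\ttt\tA + \tb = 0$ in $R_\m$, using the identity $t\tilde{\tb} = (\mathtt{z}\mathtt{m} + 1)\tilde{\tb}$ to cancel the two terms over the common denominator $t$.

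For $(\Rightarrow)$ I would write $\ttt = \tilde{\ttt}/t$ with $\tilde{\ttt}\in R^{1\times m}$ and $t\in R\setminus\m$, and first lift the $R_\m$-equation $\ttt\tA + \tb = 0$ to an honest $R$-equation. Here I expect the main obstacle: the map $R \to R_\m$ need not be injective, so $(\tilde{\ttt}\tilde{\tA} + t\tilde{\tb})/t = 0$ in $R_\m$ only guarantees the existence of some $u \in R\setminus\m$ annihilating the numerator. My remedy is to absorb $u$ into both parts of the fraction, replacing $\tilde{\ttt}$ by $u\tilde{\ttt}$ and $t$ by $ut$; the fraction $\ttt$ is unchanged, and $ut \notin \m$ because $\m$ is prime. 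After this normalization I have the genuine $R$-equation $\tilde{\ttt}\tilde{\tA} + t\tilde{\tb} = 0$. Next I would exploit that $\m$ is \emph{maximal}: since $t\notin\m$, its class $\bar t$ is invertible in the field $R/\m$, producing $y \in R$ and $z_1,\dots,z_k \in R$ with $yt - 1 = z_1m_1+\cdots+z_km_k$, i.e.\ $yt = 1 + \mathtt{z}\mathtt{m}$ for $\mathtt{z} := (z_1\ \dots\ z_k)$. Multiplying the $R$-equation by $y$ and substituting would then yield $y\tilde{\ttt}\tilde{\tA} + (1 + \mathtt{z}\mathtt{m})\tilde{\tb} = 0$, which is exactly (\ref{LocalModuleMembershipEqn}) with $\mathtt{y} := y\tilde{\ttt}$ and $\mathtt{z}$ as above.

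The anticipated main obstacle is thus the clearing-of-denominators step at the start of $(\Rightarrow)$, which forces one to remember that $R \to R_\m$ may fail to be injective so that an $R_\m$-equation only lifts to $R$ modulo a killer factor. Maximality of $\m$ is not needed for that absorption (primeness suffices to ensure $ut \notin \m$), but it is essential later for the Bezout-type identity $yt - 1 \in \m$ which converts the auxiliary denominator $t$ into an $R$-linear combination of the generators $m_i$, producing precisely the rows $\mathtt{m}\tilde{\tb}$ of the augmented system in (\ref{LocalModuleMembershipEqn}).
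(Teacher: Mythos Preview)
Your proof is correct and follows essentially the same route as the paper: split $\tilde{\ts}=(\mathtt{y}\ \mathtt{z})$, set $t=\mathtt{z}\mathtt{m}+1$ and $\ttt=\mathtt{y}/t$ for $(\Leftarrow)$, and for $(\Rightarrow)$ use maximality of $\m$ to invert $t$ modulo $\m$, obtaining a B\'ezout identity $yt=1+\mathtt{z}\mathtt{m}$, then multiply through by $y$ and set $\mathtt{y}:=y\tilde{\ttt}$. Your treatment of the ``killer'' factor $u$ is in fact more careful than the paper's, which passes directly from $\ttt\tA+\tb=0$ in $R_\m$ to $\tilde{\ttt}\tilde{\tA}+t\tilde{\tb}=0$ in $R$ without comment; your absorption step (replace $\tilde{\ttt},t$ by $u\tilde{\ttt},ut$) is precisely what justifies that line when $R\to R_\m$ fails to be injective.
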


This proof is \emph{constructive}. Another short but nonconstructive proof was suggested by our colleague \textsc{Florian Eisele} using \textsc{Nakayama}'s Lemma, which is also the most illuminating way to interpret Formula (\ref{LocalModuleMembershipEqn}). The proof given here is indeed a reduction to the effective submodule membership problem over $R$. We don't see a way to generalize this proof to non-maximal prime ideals.

Note that we formulate the proposition for a single row matrix $\tb$. For a multi-row matrix $\tB$ simply stack the results of the proposition applied to each row $\tb$ of $\tB$. Contrary to \textsc{Gr\"obner} basis methods, the proof of the proposition does \emph{not} provide a normal form\footnote{This is an instance where the name $\mathtt{DecideZero}$ makes more sense than $\mathtt{Reduce}$ or $\mathtt{NormalForm}$.}. Nevertheless the proof yields an \emph{effective} solution of the submodule membership problem (see §\ref{RightDivide}). Further note that at no step we need to compute any kind distinguished basis over the local ring. The advantages but also the drawbacks of avoiding such a basis will be discussed in §\ref{comparison mora}.

\begin{exmp}\label{localexample}
Let $R=k[x]$ for an arbitrary field $k$ and $\m=\langle x\rangle$. We want to compute $\ttt\in R_\m^{1\times1}$ with $\mathtt{tA+b=0}$ for $\tA=\tilde{\tA}:=x-x^2$ and $\tb=\tilde{\tb}:=x$ regarded as $1\times 1$-matrices. In this case the gcd of $\tilde{\tA}$ and $\tm\tilde{\tb}$ coincides with $\tilde{\tb}$ and the extended \textsc{Euclid}ian algorithm\footnote{This is a special case of the \textsc{Hermite} normal form algorithm applied to a one-column matrix.} directly yields the desired coefficients $\mathtt{y}$ and $\mathtt{z}$:
\begin{eqnarray*}
  (\underbrace{-1}_{\mathtt{y}})\cdot(\underbrace{x-x^2}_{\tilde{\tA}})+(\underbrace{-1}_{\mathtt{z}})\cdot(\underbrace{x}_{\tm}\cdot \underbrace{x}_{\tilde{\tb}}) = \underbrace{-x}_{-\tilde{\tb}} & \Leftrightarrow & (\underbrace{-1}_{\mathtt{y}})\cdot(\underbrace{x-x^2}_{\tilde{\tA}})+(\underbrace{-x+1}_{t=\mathtt{z}\tm+1})\underbrace{x}_{\tilde{\tb}} =0\\
  & \Leftrightarrow & \underbrace{\frac{1}{x-1}}_{\ttt=\frac{\mathtt{y}}{t}}(\underbrace{x-x^2}_{\tA})+\underbrace{x}_{\tb}=0
\end{eqnarray*}
\end{exmp}

\subsection{Complexity estimation for local polynomials rings}\label{LocalComplexity}

As \textsc{E.~Mayr} has shown in \cite{Mayr}, the ideal membership problem over the polynomial ring $\mathbb{Q}[x_1,\dots, x_n]$ is exponential space complete. Proposition  \ref{LocalModuleMembership} implies a result about the complexity of computations over the local polynomial ring $\mathbb{Q}[x_1,\dots, x_n]_{\langle x_1,\dots, x_n\rangle}$: It gives a polynomial time reduction from the ideal membership problem over the localized polynomial ring $\mathbb{Q}[x_1,\dots, x_n]_{\langle x_1,\dots, x_n\rangle}$ to the ideal membership problem over the polynomial ring $\mathbb{Q}[x_1,\dots, x_n]$.

\begin{coro}
The ideal membership problem over $\mathbb{Q}[x_1,\dots, x_n]_{\langle x_1,\dots, x_n\rangle}$ is solvable in exponential space.
\end{coro}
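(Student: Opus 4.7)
The plan is to chain together Proposition~\ref{LocalModuleMembership} with \textsc{Mayr}'s exponential-space algorithm \cite{Mayr} for ideal membership over $\mathbb{Q}[x_1,\ldots,x_n]$. The proposition already does essentially all the work by exhibiting a concrete translation of the local question into a global one; the remaining task is to check that the translation is size-preserving in the sense required for an \textsc{Expspace} bound to transfer.

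Setting $R:=\mathbb{Q}[x_1,\ldots,x_n]$ and $\m=\langle x_1,\ldots,x_n\rangle$, I would take the input of the local ideal membership problem to consist of polynomials $a_1,\ldots,a_m,b\in R$; every input over $R_\m$ can be brought into this form by clearing denominators, which is harmless since denominators are units and hence do not affect ideal membership. Applying Proposition~\ref{LocalModuleMembership} with $\tilde{\tA}=(a_1,\ldots,a_m)^{\top}$, $\tilde{\tb}=b$, and $\mathtt{m}=(x_1,\ldots,x_n)^{\top}$ rewrites the question ``is $b/1$ in $\langle a_1/1,\ldots,a_m/1\rangle\subseteq R_\m$?'' as the global question of whether $-b$ lies in the $R$-ideal
\[
  \langle a_1,\ldots,a_m,\, x_1 b,\ldots,x_n b\rangle \;\subseteq\; \mathbb{Q}[x_1,\ldots,x_n],
\]
which is exactly an instance of the problem handled by \textsc{Mayr}'s algorithm.

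Finally, I would compose the two. The reduction itself runs in logarithmic space: it only writes down the $n$ additional generators $x_i b$, each of bit-size at most that of $b$ plus $O(\log \deg b)$. Crucially, the number of variables remains $n$, the number of generators grows only from $m$ to $m+n$, and the maximum total degree grows by at most one, so the produced global instance has size polynomial in that of the local input. Invoking \textsc{Mayr}'s theorem then yields the corollary. The one step meriting a second glance---and the only plausible obstacle---is verifying that the reduction does \emph{not} introduce fresh indeterminates: since \textsc{Mayr}'s bound is genuinely exponential in the number of variables, any covert enlargement of the polynomial ring would be fatal. It does not happen here, because the auxiliary quantities $\mathtt{y}\in R^{1\times m}$ and $\mathtt{z}\in R^{1\times k}$ appearing in Proposition~\ref{LocalModuleMembership} are unknowns of the $R$-linear system (\ref{LocalModuleMembershipEqn}), not new generators of the coefficient ring.
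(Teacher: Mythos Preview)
Your proposal is correct and follows essentially the same approach as the paper: reduce the local ideal membership problem to the global one via Proposition~\ref{LocalModuleMembership}, then invoke \textsc{Mayr}'s \textsc{Expspace} bound. In fact you are more careful than the paper, which simply asserts that Proposition~\ref{LocalModuleMembership} ``gives a polynomial time reduction'' without spelling out, as you do, that no new indeterminates are introduced and that the instance size grows only polynomially.
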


\subsection{A comparison with \textsc{Mora}'s algorithm}\label{comparison mora}

For a polynomial ring $R:=k[x_1,\ldots,x_n]$ over a computable field $k$ \textsc{Mora}'s algorithm \cite{Mora} makes the category $R_\m-\textbf{fpmod}$ for $\m=\langle x_1,\ldots,x_n\rangle$ computable. The algorithms suggested in Lemma~\ref{LocalSyzygies} and Proposition~\ref{LocalModuleMembership} describe our alternative approach to establish the computability of $R_\m$, whereas \textsc{Mora}'s algorithm can be seen as the classical way to this end.

Except for the use of a a different reduction method capable of dealing with a local term ordering\footnote{Local term ordering implies $x_i<1$ for all $i$.}, \textsc{Mora}'s algorithm proceeds exactly in the same way as \textsc{Buchberger}'s algorithm, where of course leading terms and $s$-polynomial depend on the chosen term ordering. And since \textsc{Mora}'s algorithm computes, as a by product, the leading ideal/module one can read off \textsc{Hilbert} series. \textsc{Mora}'s different reduction method minimizes the so-called ``ecart'' (which measures the distance of a polynomial from being homogeneous) by a kind of elimination procedure, which can be very expensive. This makes \textsc{Mora}'s reduction slower than \textsc{Buchberger}'s. For a modern treatment of the theory of local standard bases in polynomial rings we refer to \cite[§1.6, 1.7]{GP08}. A free implementation can be found in \textsc{Singular} \cite{singular310}.

Hence, we argue that our approach to localization which only requires an implementation of \textsc{Buchberger}'s algorithm is computationally superior to a comparable implementation of \textsc{Mora}'s algorithm.

\textsc{Mora}'s slower reduction is even dwarfed by yet another major issue: Unlike for small input, we experienced that \textsc{Mora}'s algorithm does not scale well enough when applied to large matrices. In comparison with the reduction given by Proposition \ref{LocalModuleMembership}, we observed that \textsc{Mora}'s algorithm creates larger units in $R_\m$, which in subsequent computations have to be interpreted as denominators. And when writing several matrices over $R_\m$ with a common denominator, these large units blow up the entries of the numerator matrices. Clearly, this makes succeeding computations much harder.

\begin{rmrk}\label{MoraHilbertSeries}
\textsc{Mora}'s algorithm is still indispensable when it comes to computing \textsc{Hil\-bert} series (and related invariants), which cannot be computed using our approach. Intermediate homological computations tend to become huge, even if the final result is typically \emph{much} smaller. Our approach is thus suited to get through the intermediate steps, while \textsc{Mora}'s algorithm can then be applied to the smaller result, e.g., to obtain invariants. This is demonstrated in Example \ref{IntersectionFormula}.
\end{rmrk}

\section{Implementation and Data Structures}\label{implementation}

As mentioned in the Introduction, this paper suggests a specification for implementing homological algebra of \textsc{Abel}ian categories (cf.~\cite{CoqSpi}). This specification is realized in the {\tt homalg} project \cite{homalg-project}.

We found the programming language of {\sf GAP4} ideally suited to realize this specification. {\sf GAP4} provides object oriented and (to some useful extent) functional programming paradigms, classical method selection, multi-dispatching, and last but not least so-called immediate and true-methods, which are extensively used to teach {\sf GAP4} how to avoid unnecessary computations by applying mathematical reasoning. All these capabilities build upon a type-system, which is as simple as possible and as sophisticated as needed for the purposes of high level computer algebra  \cite{BrLi}.

The abstract setting of \textsc{Abel}ian categories is implemented in the {\tt homalg} package \cite{homalg-package} according to §\ref{basic_constructions}. Only building upon the basic constructions as abstract operations, the implementation provides routines to compute (co)homology, derived functors, long exact sequences \cite{BR}, \textsc{Cartan-Eilenberg} resolutions, hyper-derived functors, spectral sequences (of bicomplexes) and the filtration they induce on (co)homology \cite{BaSF}, etc.

In order to use these routines for performing computations within a concrete \textsc{Abel}ian category, the latter only needs to provide its specific implementation of the basic constructions. The specifics of our implementation for the category $\Rfpmod$ of finitely presented modules over a computable ring $R$ were detailed in §\ref{computability}, utilizing the natural equivalence $\coker:\Rfpres \xrightarrow{\sim} \Rfpmod$.  More precisely, the proof of Theorem~\ref{R-fpres_basic_constructions} shows how this equivalence of categories is used to translate \emph{all} constructions in the \textsc{Abel}ian category $\Rfpmod$ to operations on matrices. Note that this translation is independent of the computable ring $R$, a point reflected in our implementation.

This allows the matrices over specific computable rings with all their operations to reside outside {\sf GAP4}, preferably in a system that has performant implementations of all the matrix operations mentioned in the proof of Theorem~\ref{R-fpres_basic_constructions}. In turned out that {\sf GAP4} does not need to know the content but only few characteristic information about the matrices created during the computations, minimizing the communication between {\sf GAP4} and the external system drastically. For further details the interested reader is referred to the documentation of the {\tt homalg} project \cite{homalg-project}.

\subsection{\texorpdfstring{$\mathtt{LocalizeRingForHomalg}$}{LocalizeRingForHomalg}}\label{LocalizeRingForHomalg}

The algorithms presented in §\ref{local rings} are implemented in a \textsf{GAP4}-package \cite{GAP4} \texttt{Localize\-Ring\-For\-Homalg} \cite{localizeringforhomalg}. The implementation is abstract in the sense that \emph{any} commutative computable ring $R$ supported by a computer algebra system to which the {\tt homalg} project \cite{homalg-project} offers an interface can be localized at any of its finitely generated maximal ideals $\m$, thus providing a new ring $R_\m$ for the {\tt homalg} project. The package \texttt{LocalizeRingForHomalg} additionally includes an interface to the implementation of \textsc{Mora}'s algorithm in \textsc{Singular} \cite{singular310}, which can alternatively be used to solve (in)homogeneous linear systems over $R_\m$, making (together with the matrix operations in the proof of Corollary~\ref{CoroLocal}) the \textsc{Abel}ian category $R_\m-\mathbf{fpmod}$ computable.

Our aim to reduce computations over $R_\m$ to ones over $R$ suggests the above used well-adapted data structure for matrices $\tA$ over $R_\m$: Write $\tA$ as a fraction $\frac{\tilde{\tA}}{s}$ with numerator matrix $\tilde{\tA}$ over $R$ and a single denominator $s\in R\setminus\m$. Lemma~\ref{LocalSyzygies} and Proposition~\ref{LocalModuleMembership}, which provide the key algorithms for this reduction, require at least common denominators for each row of the input matrices. But taking common denominators for each row (or column) of a matrix is not suited for matrix multiplication. We saw in the proof of Corollary~\ref{CoroLocal} that this data structure uses no more than computations of common denominators to realize the remaining basic operations for matrices over $R_\m$.

The computational aspects of fraction arithmetic are critical for performance issues since writing matrices with a common denominator blows up numerator matrices. It became efficient the moment we started using least common multiples (as far as they exist in $R$) for common denominators and for canceling fractions representing ring elements.

Solving the submodule membership problem for the same submodule and various different elements occurs very often in homological computations. For Proposition~\ref{LocalModuleMembership} this means that the matrix $\tilde{\tA}$ as part of $\begin{pmatrix}\tilde{\tA} \\ \mathtt{m}\tilde{\tb}\end{pmatrix}$ enters many computations with different rows $\tilde{\tb}$. Thus replacing $\tilde{\tA}$ by a distinguished basis (over $R$, if such a basis exists) is a minor optimization. But with $\tilde{\tA}$ being a distinguished basis one can first check if $\tilde{\tb}$ reduces to zero modulo $\tilde{\tA}$. If so, then the row vector $\mathtt{z}$, occurring in the proof of Proposition~\ref{LocalModuleMembership}, can be assumed zero. Hence, the row vector $\ttt$, occurring in the statement of the proposition, has $1$ as denominator. This heuristic succeeds remarkably often and prevents the creation of unnecessary denominators, which would propagate through the remaining computations.

\section{Examples}\label{examples}

The examples below are computed using several packages from the \texttt{homalg} project \cite{homalg-project}, all written in \textsf{GAP4} \cite{GAP4}. Here we use \textsc{Singular} \cite{singular310} as one of the most performant \textsc{Gr\"obner} basis engines with an existing interface in the {\tt homalg} project.

\begin{exmp}\label{Purity}
Let $R:=\Q[a,b,c,d,e]$ and $M$ the $R$-module given by 4 generators satisfying the 9 relations given below, i.e.,  presented by a $9\times4$-matrix $\tA$:
\begin{lstlisting}
gap> LoadPackage( "RingsForHomalg" );;
gap> R := HomalgFieldOfRationalsInSingular( ) * "a,b,c,d,e";
<An external ring residing in the CAS Singular>
gap> A := HomalgMatrix( "[\
> 2*a+c+d+e-2,2*a+c+d+e-2,2*a+c+d+e-2,0,\
> 2*c*d+d^2+2*c*e+2*d*e-3*e^2-2*c-d-6*e+1,\
>   2*c*d+d^2+2*c*e+2*d*e-3*e^2-2*c-d-6*e+1,\
>   2*c*d+d^2+2*c*e+2*d*e-3*e^2-2*c-d-6*e+1,0,\
> -4*a+2*b-c-d-e+2,-4*a+2*b-c-d-e+2,\
>   -c+d+e+2,4*a*d-2*b*d+2*d^2+2*d*e,\
> c^2-d-1,c^2-d-1,c^2-d-1,0,\
> 4*d*e^2-d^2+2*c*e+4*d*e-3*e^2-2*c+3*d-6*e+5,\
>   4*d*e^2-d^2+2*c*e+4*d*e-3*e^2-2*c+3*d-6*e+5,\
>   4*d*e^2-d^2+2*c*e+4*d*e-3*e^2-2*c+3*d-6*e+5,0,\
> 0,b^2+a+c+d+e,0,b^2*e+a*e+c*e+d*e+e^2,\
> 0,b^2*d+a*d+c*d+d^2+d*e,0,0,\
> 0,a*b^2+a^2+a*c+a*d+a*e,0,0,\
> 4*b^3*d-4*d^3-12*d^2*e-32*c*e^2+12*e^3+21*d^2\
>   -42*c*e+40*d*e+27*e^2+8*c-9*d+16*e-17,\
>   -4*a*b*d-4*b*c*d-4*b*d^2-4*d^3-4*b*d*e-12*d^2*e-32*c*e^2+\
>   12*e^3+21*d^2-42*c*e+40*d*e+27*e^2+8*c-9*d+16*e-17,\
>   -12*d^2*e-32*c*e^2+12*e^3+21*d^2-42*c*e+44*d*e+\
>   27*e^2+8*c-9*d+16*e-17,-4*b^3+4*d^2+4*e\
> ]", 9, 4, R );
<A 9 x 4 matrix over an external ring>
gap> LoadPackage( "Modules" );;
gap> M := LeftPresentation( A );
<A left module presented by 9 relations for 4 generators>
\end{lstlisting}

Let $R_0 := R_\m$ denote the localized ring at the maximal ideal $\m=\langle a,b,c,d,e \rangle$ in $R$ corresponding to the ``origin'' in $\mathbb{A}^5(\Q)$. We now want to compute (following \cite{BaSF}) the purity filtration (=equidimensional filtration) of the localized $R_\m$-module $$M_0:=M_\m=R_\m\otimes_R M.$$

One possible approach would be to compute the purity filtration for the global $R$-module $M$ and to localize the resulting filtration afterwards\footnote{Justifying this statement is left to the reader.}. But since the module $M$ is also supported\footnote{Recall, $\operatorname{supp} M := \{ \mathfrak{p} \in \operatorname{Spec}(R) \mid M_\mathfrak{p} \neq 0 \}$.} at components not including the origin it is clear that computing the global purity filtration will automatically accumulate any structural complexity of $M$ at these components as well. The algorithm suggested in \cite{BaSF} for computing the purity filtration starts by resolving $M$. And indeed, an early syzygy computation during the resolution of $M$ failed to terminate within a reasonable time.

The computation for the localized module $M_0$ over the local ring $R_0$ did not terminate using \textsc{Mora}'s algorithm either, where it gets stuck in a basis computation at an early stage.

However, the purity filtration can be computed for the localized module $M_0$ using the approach suggested in §\ref{local rings} within seconds:
\begin{lstlisting}
gap> LoadPackage( "LocalizeRingForHomalg" );;
gap> R0 := LocalizeAtZero( R );
<A local ring>
gap> M0 := R0 * M;
<A left module presented by 9 relations for 4 generators>
gap> ByASmallerPresentation( M0 );
<A left module presented by 10 relations for 3 generators>
gap> filt0 := PurityFiltration( M0 );
<The ascending purity filtration with degrees [ -3 .. 0 ] and graded parts:
   0:   <A zero left module>
  -1:   <A cyclic reflexively pure grade 1 left module presented by
1 relation for a cyclic generator>
  -2:   <A reflexively pure grade 2 left module presented by 7 relations for 
2 generators>
  -3:   <A cyclic reflexively pure grade 3 left module presented by
3 relations for a cyclic generator>
of
<A non-pure grade 1 left module presented by 10 relations for 3 generators>> 
\end{lstlisting}
The following command computes an explicit isomorphism between a new module (equip\-ped with a triangular presentation compatible with the purity filtration) and our original module $M_0$.
\begin{lstlisting}
gap> m := IsomorphismOfFiltration( filt0 );
<An isomorphism of left modules>
gap> FilteredModule := Source( m );
<A left module presented by 7 relations for 4 generators>
gap> Display( FilteredModule );
_[1,1],0,     0,     _[1,4],
0,     _[2,2],0,     0,
0,     0,     _[3,3],0,
0,     _[4,2],_[4,3],-b-1/2,
0,     0,     0,     _[5,4],
0,     0,     0,     _[6,4],
0,     0,     0,     _[7,4]
/(4*a*b*d^3-2*b^2*d^3+2*b*d^3*e+2*a*d^3-b*d^3+d^3*e-4*a*b*d+2*b^2*d+2*b*d^2-2*\
b*d*e-2*a*d+b*d+d^2-d*e-2*b-1)

Cokernel of the map

R^(1x7) --> R^(1x4), ( for R := Q[a,b,c,d,e]_< a, b, c, d, e > )

currently represented by the above matrix
\end{lstlisting}
\textsc{Singular} suppressed the relatively big entries of the triangular presentation matrix. We use the following command to make them visible. We see that in fact all fractions cancel except for one entry which retains a denominator:
\begin{lstlisting}
gap> EntriesOfHomalgMatrix( MatrixOfRelations( FilteredModule ) );;
gap> ListToListList( last, 7, 4 );
[ [ (b^2+a+c+d+e)/1, 0/1, 0/1, -1/1 ],
  [ 0/1, (2*a-b+d+e)/1, 0/1, 0/1 ],
  [ 0/1, 0/1, (b^3-d^2-e)/1, 0/1 ],
  [ 0/1, (2*b^2*d*e+b*d*e+2*c*d*e+d^2*e+d*e^2)/1,
      (2*a*b*e+b^2*e+2*b*c*e+2*b*d*e+2*d^2*e+2*b*e^2+a*e+c*e+d*e+3*e^2)/1,
      (-1/2)/(2*a*d^3-b*d^3+d^3*e-2*a*d+b*d+d^2-d*e-1) ],
  [ 0/1, 0/1, 0/1, d/1 ],
  [ 0/1, 0/1, 0/1, a/1 ],
  [ 0/1, 0/1, 0/1, (b^4-b^3*e-b*e+e^2)/1 ] ]
\end{lstlisting}

\begin{rmrk}
One would wonder why local computations (as in Example~\ref{Purity} above and Example~\ref{IntersectionFormula} below) using the approach suggested in §\ref{local rings} could be faster than the global ones, although the local syzygies computation in Lemma~\ref{LocalSyzygies} is nothing but a global syzygies computation and the effective solution of the local submodule membership problem in Proposition~\ref{LocalModuleMembership} is again nothing but the effective solution of an adapted global one. This has to do with the fact that the local ring $R_\m$ has many more units than $R$, leading to the structural simplifications mentioned in Remark \ref{Nakayama}. Furthermore, $\texttt{DecideZero}_{R_\m}(\tB,\tA)$ often equals zero even if $\texttt{DecideZero}_{R}(\tilde{\tB},\tilde{\tA})$ (for the corresponding numerator matrices $\tilde{\tB},\tilde{\tA}$) does not (cf.~§\ref{RightDivide}). For the geometric interpretation of the last statement let $\langle\tilde{\tC}\rangle$ denote the submodule of the free $R$-module generated by the rows of the matrix $\tilde{\tC}$. Then the maximal ideal $\m$ often enough does not lie in the support of the nontrivial $R$-subfactor module $\langle\tilde{\tB}\rangle/\langle\tilde{\tA}\rangle$, i.e., the $R_\m$-subfactor $\langle\tB\rangle/\langle\tA\rangle$ is trivial although its global counterpart $\langle\tilde{\tB}\rangle/\langle\tilde{\tA}\rangle$ is not. So one can roughly say that zero modules occur more frequently in local homological computations than in global ones, making the former faster, in general.
\end{rmrk}

\end{exmp}

\begin{exmp}\label{IntersectionFormula}
\textsc{Serre}'s intersection multiplicity formula of two ideals $I, J \triangleleft R$ at a prime ideal $\mathfrak{p}\triangleleft R$ (cf.~\cite[Thm.~A.1.1]{Har})
$$i(I, J; \mathfrak{p}) = \sum_i(-1)^i \mathrm{length}\left(\operatorname{Tor}^{R_\mathfrak{p}}_i(R_\mathfrak{p}/I_\mathfrak{p},R_\mathfrak{p}/J_\mathfrak{p})\right)$$
offers a nice demonstration of Remark~\ref{MoraHilbertSeries}.

Let $R:= \mathbb{F}_5[x,y,z,v,w]$ with maximal ideal $\mathfrak{p}=\m=\langle x,y,z,v,w\rangle$. We use the package {\tt LocalizeRingForHomalg} to define the two localized rings $R_0=S_0:=R_\m$. The ring $R_0$ utilizes Lemma~\ref{LocalSyzygies} and Proposition~\ref{LocalModuleMembership}, whereas $S_0$ uses \textsc{Mora}'s algorithm to solve (in)homogeneous linear systems.
\begin{lstlisting}
gap> LoadPackage( "RingsForHomalg" );;
gap> R := HomalgRingOfIntegersInSingular( 5 ) * "x,y,z,v,w";;
gap> LoadPackage( "LocalizeRingForHomalg" );;
gap> R0 := LocalizeAtZero( R );;
gap> S0 := LocalizePolynomialRingAtZeroWithMora( R );;
\end{lstlisting}

The ideals $I$ and $J$ are the intersection of ideals similar to those in \cite[Example.~A.1.1.1]{Har} with ideals not supported at zero:
\begin{lstlisting}
gap> i1 := HomalgMatrix( "[ \
> x-z, \
> y-w  \
> ]", 2, 1, R );;
gap> i2 := HomalgMatrix( "[ \
> y^6*v^2*w-y^3*v*w^20+1,         \
> x*y^4*z^4*w-z^5*w^5+x^3*y*z^2-1 \
> ]", 2, 1, R );;
gap> LoadPackage( "Modules" );;
gap> I := Intersect( LeftSubmodule( i1 ), LeftSubmodule( i2 ) );;
gap> I0 := R0 * I;
<A torsion-free left ideal given by 4 generators>
gap> OI0 := FactorObject( I0 );
<A cyclic left module presented by yet unknown relations for a cyclic generator>
gap> j1 := HomalgMatrix( "[ \
> x*z, \
> x*w, \
> y*z, \
> y*w, \
> v^2  \
> ]", 5, 1, R );;
gap> j2 := HomalgMatrix( "[ \
> y^6*v^2*w-y^3*v*w^2+1,           \
> x*y^4*z^4*w-z^5*w^5+x^3*y*z^2-1, \
> x^7                              \
> ]", 3, 1, R );;
gap> J := Intersect( LeftSubmodule( j1 ), LeftSubmodule( j2 ) );;
gap> J0 := R0 * J;;
gap> OJ0 := FactorObject( J0 );
<A cyclic left module presented by yet unknown relations for a cyclic generator>
\end{lstlisting}

Computing the $\mathrm{Tor}$-modules over $S_0$ (using \textsc{Mora}'s algorithm) or globally\footnote{$\operatorname{Tor}_i^{S^{-1} R}(S^{-1} M, S^{-1} N)=S^{-1}\operatorname{Tor}_i^R(M,N)$ for multiplicatively closed subsets $S\subset R$,  cf.~\cite[Prop.~7.17 or Cor.~10.72]{Rot09}.} over $R$ both did not terminate within a week. But over $R_0$ (using the approach suggested in §\ref{local rings}) the computation terminates in few seconds:
\begin{lstlisting}
gap> T0 := Tor( OI0, OJ0 );
<A graded homology object consisting of 3 left modules at degrees [ 0 .. 2 ]>
\end{lstlisting}

As mentioned in Remark §\ref{MoraHilbertSeries}, our approach cannot produce a ``distinguished basis'' for the presentation matrices of the resulting $\operatorname{Tor}$-modules, from which their \textsc{Hilbert} series can be read off. The sum of the coefficients of the \textsc{Hilbert} series of the module $\operatorname{Tor}^{R_\m}_i(R_\m/I_\m,R_\m/J_\m)$ is nothing but its dimension as a $R/\m\cong\mathbb{F}_5$ vector space, which in this case coincides with the $\operatorname{length}$. But now we can apply \textsc{Mora}'s algorithm to the already computed presentation matrices of the modules $\operatorname{Tor}^{R_\m}_i(R_\m/I_\m,R_\m/J_\m)$:
\begin{lstlisting}
gap> T0Mora := S0 * T0;
<A sequence containing 2 morphisms of left modules at degrees [ 0 .. 2 ]>
gap> List ( ObjectsOfComplex ( T0Mora ), AffineDegree );
[ 6, 2, 0 ]
\end{lstlisting}
Thus the intersection multiplicity at $\m$ is $6-2+0=4$.

Of course, getting rid of the irrelevant\footnote{I.e., those primary ideals $\mathfrak{q}$ with $\mathfrak{q}\not\subset \m$.} primary components of $I$ and/or $J$ would simplify the computations. But we were not able to compute a primary decomposition for $I$ with the computer algebra system \textsc{Singular}, and computing one for $J$ took more than seven minutes, which is still longer than the few seconds needed by our approach.
\end{exmp}

\section{Conclusion}

The above axiomatic setup for algorithmic homological algebra in the categories of finitely presented modules, merely viewed as \textsc{Abel}ian categories, only requires solving (in)homogeneous linear systems and does not enforce the introduction of any notion of distinguished basis. This abstraction motivated a constructive approach to the homological algebra of such module categories over commutative rings localized at maximal ideals, an approach in which global computations replace local ones.

For local polynomial rings this shows that \textsc{Buchberger}'s algorithm provides an alternative to \textsc{Mora}'s algorithm, an alternative which for some examples may even lead to a remarkable gain in computational efficiency. \textsc{Mora}'s algorithm remains indispensable when it comes to computing \textsc{Hilbert} series. It is hence a mixture of both algorithms that proves more useful in practice.

\section*{Acknowledgement}

We would like to thank Sebastian Posur for spotting a typo in the proof of Lemma~\ref{LocalSyzygies}.

\appendix

\section{Existential Quantifiers in \textsc{Abel}ian Categories}\label{abelian}

In this appendix we will briefly recall the \emph{existential quantifiers} for lift, colift, and projective lift appearing in §\ref{basic_constructions}.

Let $\A$ be a category. We use the following convention\footnote{This differs from the convention followed in \cite[§II.1, p.~41]{HS}.} for composing morphisms:
\[
\begin{array}{rclcc}
  \Hom_\A(M,L)&\times& \Hom_\A(L,N) &\to& \Hom_\A(M,N), \\
  (\phi&,&\psi) &\mapsto& \phi\psi
\end{array}
\]

\begin{defn}[{\cite[§II.6, p.~61]{HS}}]\label{lift_colift}
Let $\A$ be a category with $0$ and $\phi:M \to N$ a morphism.
\begin{enumerate}
  \item[(k)] A morphism $\kappa:K \to M$ is called ``the'' \textbf{kernel} of $\phi:M \to N$
    if
    \begin{enumerate}
      \item[(i)] $\kappa\phi=0$, and
      \item[(ii)] for all objects $L$ and all morphisms $\tau:L\to M$ with $\tau\phi=0$ there \fbox{exists} a \emph{unique} morphism $\tau_0:L\to K$,
        such that $\tau=\tau_0\kappa$. $\tau_0$ is called the \textbf{lift} of $\tau$ along $\kappa$.
    \end{enumerate}
    It follows from the uniqueness of the lift $\tau_0$ that $\kappa$ is a \emph{monomorphism}.
    \[
      \xymatrix{
        L \ar@{.>}[d]_{\tau_0} \ar[rd]_\tau \ar[rrd]^0 \\
        K \ar@{^{(}->}[r]_\kappa & M \ar[r]_\phi & N
      }
    \]
     $K$ is called ``the'' \textbf{kernel object} of $\phi$. Depending on the context $\ker\phi$ sometimes stands for the morphism $\kappa$ and sometimes for the object $K$.
  \item[(c)] A morphism $\epsilon:M \to C$ is called ``the'' \textbf{cokernel} of $\phi:M \to N$
  if
    \begin{enumerate}
      \item[(i)] $\phi\epsilon=0$, and
      \item[(ii)] for all objects $L$ and all morphisms $\eta:N\to L$ with $\phi\eta=0$ there \fbox{exists} a \emph{unique} morphism $\eta_0:C\to L$,
        such that $\eta=\epsilon\eta_0$. $\eta_0$ is called the \textbf{colift} of $\eta$ along $\epsilon$.
    \end{enumerate}
    It follows from the uniqueness of the colift $\eta_0$ that $\epsilon$ is an \emph{epimorphism}.
    \[
      \xymatrix{
        L \\
        C \ar@{.>}[u]^{\eta_0} & N\ar[ul]^\eta \ar@{>>}[l]^\epsilon & M  \ar[l]^\phi \ar[llu]_0
      }
    \]
   $C$ is called ``the'' \textbf{cokernel object} of $\phi$. Depending on the context $\coker\phi$ sometimes stands for the morphism $\epsilon$ and sometimes for the object $C$.
\end{enumerate}
\end{defn}

\begin{defn}\label{special_projective_lift}
  An object $P$ in a category $\A$ is called \textbf{projective}, if for each epimorphism
  $\epsilon:M \twoheadrightarrow N$ and each morphism $\phi:P\to N$ there \fbox{exists} a morphism $\phi_1:P\to M$ with
  $\phi_1\epsilon=\phi$.
  \[
    \xymatrix{
      P \ar[rd]^\phi \ar@{.>}[d]_{\phi_1}\\
      M \ar@{>>}[r]_\epsilon & N
    }
  \]
  We call $\phi_1$ a \textbf{projective lift} of $\phi$ along $\epsilon$.
\end{defn}

\begin{rmrk}\label{general_projective_lift}
A supposedly more general form of the projective lift is often used in \emph{\textsc{Abel}ian} categories. The assumption of $\epsilon$ being epic can be relaxed in the following way: Let $\A$ be an \textsc{Abel}ian category and $\epsilon:M\to N$ a morphism with $\epsilon:M \stackrel{\pi}{\twoheadrightarrow} I \stackrel{\iota}{\hookrightarrow} N$. According to the homomorphism theorem (cf.~{\cite[Prop.~II.9.6]{HS}},{\cite[Thm.~IX.2.1]{ML95}}) there exists an essentially unique decomposition of $\epsilon$ into an epic $\pi$ and a monic $\iota$. Further let $\beta:N\to L$ be a morphism with kernel $\iota$ (in other words, $M\xrightarrow{\epsilon}N\xrightarrow{\beta}L$ is an \textbf{exact sequence}), $P\in\A$ projective object, and $\phi:P\to N$ a morphism with $\phi\beta=0$. This last condition expresses that the \textbf{image subobject}\footnote{For details see \cite[Def.~of~suboject, p.~306]{Rot09}.} of $\phi$ is ``contained'' in the image subobject of $\epsilon$. It easily follows that there exists a \textbf{projective lift} $\phi_1$ along $\epsilon$ making the following diagram commutative:
\[
  \xymatrix{
    P \ar@{.>}[d]_{\phi_1} \ar[rd]_\phi \ar[rrd]^0 \\
    M \ar[r]_\epsilon & N \ar[r]_\beta & L
  }
\]
The projective lift $\phi_1$ is constructed in two steps:
\[
  \xymatrix{
    & P \ar@{.>}_{\phi_1}[dl] \ar@{.>}[d]_{\phi_0} \ar[rd]_\phi \ar[rrd]^0 \\
    M \ar@{>>}[r]_{\pi} & I \ar@{^{(}->}[r]_\iota & N \ar[r]_\beta & L
  }
\]
First construct $\phi_0$ as the lift of $\phi$ along the monomorphism $\iota$. $\phi_1$ is then the projective lift of $\phi_0$ along the epimorphism $\pi$.
\end{rmrk}

\newcommand{\etalchar}[1]{$^{#1}$}
\def\cprime{$'$} \def\cprime{$'$} \def\cprime{$'$} \def\cprime{$'$}
  \def\cprime{$'$}
\providecommand{\bysame}{\leavevmode\hbox to3em{\hrulefill}\thinspace}
\providecommand{\MR}{\relax\ifhmode\unskip\space\fi MR }
\providecommand{\MRhref}[2]{%
  \href{http://www.ams.org/mathscinet-getitem?mr=#1}{#2}
}
\providecommand{\href}[2]{#2}



\end{document}